\theoremstyle{plain}
\newtheorem{thm}{Theorem}
\newtheorem{cor}[thm]{Corollary}
\newtheorem{lem}[thm]{Lemma}
\newtheorem{prop}[thm]{Proposition}
\theoremstyle{remark}
\newtheorem{rem}[thm]{Remark}
\newcommand{\cal}{\mathcal}
\def\hsymbu#1{\smash{\lower1.7ex\hbox{\huge$#1$}}}
\def\rmoveio#1#2{
\setlength{\unitlength}{#1}
\begin{picture}(50,30)
\put(5,0){\line(0,1){30}}

{\allinethickness{.8pt}
\put(10,15){\vector(1,0){13}}
\put(23,15){\vector(-1,0){13}}}

\qbezier(25,0)(25,20)(40,20)
\qbezier(40,20)(45,20)(45,15)
\qbezier(45,15)(45,10)(40,10)
\qbezier(40,10)(35,10)(31,14)
\qbezier(28,17)(25,25)(25,30)

\ifnum#2=2
\put(3,28){\path(0,0)(2,2)(4,0)}
\put(28,28){\path(0,0)(2,2)(4,0)}
\put(38,15){\makebox{${\Huge c_{1}}$}}
\fi

\end{picture}
}
\def\rmoveiio#1#2{
\setlength{\unitlength}{#1}
\begin{picture}(60,30)
\put(5,0){\line(0,1){30}}
\put(15,0){\line(0,1){30}}

{\allinethickness{.8pt}
\put(20,15){\vector(1,0){15}}
\put(35,15){\vector(-1,0){15}}}

\qbezier(40,0)(42,1)(47,3)
\qbezier(52,6)(68,15)(52,24)
\qbezier(47,27)(42,30)(40,30)

\qbezier(60,0)(20,15)(60,30)

\ifnum#2=2
\put(2,27){\path(0,0)(3,3)(6,0)}
\put(12,27){\path(0,0)(3,3)(6,0)}
\put(40,27){\path(0,0)(0,3)(3,3)}
\put(60,27){\path(0,0)(0,3)(-3,3)}
\put(47,16){\makebox{${\Huge c_{1}}$}}
\put(47,10){\makebox{${\Huge c_{2}}$}}
\fi

\ifnum#2=3
\put(2,2){\path(0,0)(3,-3)(6,0)}
\put(12,27){\path(0,0)(3,3)(6,0)}
\put(40,3){\path(0,0)(0,-3)(3,-3)}
\put(60,27){\path(0,0)(0,3)(-3,3)}
\put(47,16){\makebox{${\Huge c_{1}}$}}
\put(47,10){\makebox{${\Huge c_{2}}$}}
\fi

\end{picture}
}
\def\rmoveiiio#1#2{
\setlength{\unitlength}{#1}
\begin{picture}(75,30)
\put(0,0){\line(1,1){15}}
\qbezier(15,15)(20,20)(20,30)

\put(10,0){\line(-1,1){4}}
\qbezier(4,6)(-5,15)(5,25)
\put(5,25){\line(1,1){5}}

\qbezier(20,0)(20,10)(16,14)
\put(14,16){\line(-1,1){8}}
\put(4,26){\line(-1,1){4}}

{\allinethickness{.8pt}
\put(25,15){\vector(1,0){15}}
\put(40,15){\vector(-1,0){15}}}

\qbezier(50,0)(50,10)(55,15)
\put(55,15){\line(1,1){15}}

\put(60,0){\line(1,1){5}}
\qbezier(65,5)(75,15)(66,24)
\put(64,26){\line(-1,1){4}}

\put(70,0){\line(-1,1){4}}
\put(64,6){\line(-1,1){8}}
\qbezier(54,16)(50,20)(50,30)

\ifnum#2=2
\put(0,27){\path(0,0)(0,3)(3,3)}
\put(7,30){\path(0,0)(3,0)(3,-3)}
\put(17,27){\path(0,0)(3,3)(6,0)}

\put(10,23){\makebox{${\Huge c_{1}}$}}
\put(10,3){\makebox{${\Huge c_{2}}$}}
\put(20,13){\makebox{${\Huge c_{3}}$}}

\put(47,27){\path(0,0)(3,3)(6,0)}
\put(60,27){\path(0,0)(0,3)(3,3)}
\put(67,30){\path(0,0)(3,0)(3,-3)}

\put(70,23){\makebox{${\Huge c'_{2}}$}}
\put(70,3){\makebox{${\Huge c'_{1}}$}}
\put(60,13){\makebox{${\Huge c'_{3}}$}}
\fi
\end{picture}
}
\def\rmovevio#1#2{
\setlength{\unitlength}{#1}
\begin{picture}(50,30)
\put(5,0){\line(0,1){30}}

{\allinethickness{.8pt}
\put(10,15){\vector(1,0){15}}
\put(25,15){\vector(-1,0){15}}}

\qbezier(30,0)(30,20)(45,20)
\qbezier(45,20)(50,20)(50,15)
\qbezier(50,15)(50,10)(45,10)
\qbezier(45,10)(30,10)(30,30)

\put(34,15){\circle{5}}

\ifnum#2=2
\put(3,28){\path(0,0)(2,2)(4,0)}
\put(28,28){\path(0,0)(2,2)(4,0)}
\put(38,15){\makebox{${\Huge c_{1}}$}}
\fi

\end{picture}
}
\def\rmoveviio#1#2{
\setlength{\unitlength}{#1}
\begin{picture}(60,30)
\put(5,0){\line(0,1){30}}
\put(15,0){\line(0,1){30}}

{\allinethickness{.8pt}
\put(20,15){\vector(1,0){15}}
\put(35,15){\vector(-1,0){15}}}

\qbezier(40,0)(80,15)(40,30)
\qbezier(60,0)(20,15)(60,30)
\put(50,4){\circle{5}}
\put(50,25){\circle{5}}

\ifnum#2=2
\put(2,27){\path(0,0)(3,3)(6,0)}
\put(12,27){\path(0,0)(3,3)(6,0)}
\put(40,27){\path(0,0)(0,3)(3,3)}
\put(60,27){\path(0,0)(0,3)(-3,3)}
\put(47,15){\makebox{${\Huge c_{1}}$}}
\put(47,10){\makebox{${\Huge c_{2}}$}}
\fi

\ifnum#2=3
\put(2,2){\path(0,0)(3,-3)(6,0)}
\put(12,27){\path(0,0)(3,3)(6,0)}
\put(40,3){\path(0,0)(0,-3)(3,-3)}
\put(60,27){\path(0,0)(0,3)(-3,3)}
\put(47,15){\makebox{${\Huge c_{1}}$}}
\put(47,10){\makebox{${\Huge c_{2}}$}}
\fi

\end{picture}
}
\def\rmoveviiio#1#2{
\setlength{\unitlength}{#1}
\begin{picture}(70,30)
\put(0,0){\line(1,1){15}}
\qbezier(15,15)(20,20)(20,30)

\put(10,0){\line(-1,1){5}}
\qbezier(5,5)(-5,15)(5,25)
\put(5,25){\line(1,1){5}}

\qbezier(20,0)(20,10)(15,15)
\put(15,15){\line(-1,1){15}}

\put(5,5){\circle{5}}
\put(15,15){\circle{5}}
\put(5,25){\circle{5}}

{\allinethickness{.8pt}
\put(28,15){\vector(1,0){14}}
\put(42,15){\vector(-1,0){14}}}

\qbezier(50,0)(50,10)(55,15)
\put(55,15){\line(1,1){15}}

\put(60,0){\line(1,1){5}}
\qbezier(65,5)(75,15)(65,25)
\put(65,25){\line(-1,1){5}}

\put(70,0){\line(-1,1){15}}
\qbezier(55,15)(50,20)(50,30)

\put(65,5){\circle{5}}
\put(55,15){\circle{5}}
\put(65,25){\circle{5}}

\ifnum#2=2
\put(0,27){\path(0,0)(0,3)(3,3)}
\put(7,30){\path(0,0)(3,0)(3,-3)}
\put(17,27){\path(0,0)(3,3)(6,0)}

\put(20,13){\makebox{${\Huge c_{1}}$}}

\put(47,27){\path(0,0)(3,3)(6,0)}
\put(60,27){\path(0,0)(0,3)(3,3)}
\put(67,30){\path(0,0)(3,0)(3,-3)}

\put(60,13){\makebox{${\Huge c'_{1}}$}}
\fi

\end{picture}
}
\def\rmovevivo#1#2{
\setlength{\unitlength}{#1}
\begin{picture}(70,30)
\put(0,0){\line(1,1){15}}
\qbezier(15,15)(20,20)(20,30)

\put(10,0){\line(-1,1){5}}
\qbezier(5,5)(-5,15)(5,25)
\put(5,25){\line(1,1){5}}

\qbezier(20,0)(20,10)(16,14)
\put(14,16){\line(-1,1){14}}

\put(5,5){\circle{5}}
\put(5,25){\circle{5}}

{\allinethickness{.8pt}
\put(28,15){\vector(1,0){14}}
\put(42,15){\vector(-1,0){14}}}

\qbezier(50,0)(50,10)(55,15)
\put(55,15){\line(1,1){15}}

\put(60,0){\line(1,1){5}}
\qbezier(65,5)(75,15)(65,25)
\put(65,25){\line(-1,1){5}}

\put(70,0){\line(-1,1){14}}
\qbezier(54,16)(50,20)(50,30)

\put(65,5){\circle{5}}
\put(65,25){\circle{5}}

\ifnum#2=2
\put(0,27){\path(0,0)(0,3)(3,3)}
\put(7,30){\path(0,0)(3,0)(3,-3)}
\put(17,27){\path(0,0)(3,3)(6,0)}

\put(20,13){\makebox{${\Huge c_{1}}$}}

\put(47,27){\path(0,0)(3,3)(6,0)}
\put(60,27){\path(0,0)(0,3)(3,3)}
\put(67,30){\path(0,0)(3,0)(3,-3)}

\put(60,13){\makebox{${\Huge c'_{1}}$}}
\fi

\end{picture}
}
\def\rmovetti#1{
\setlength{\unitlength}{#1}
\begin{picture}(60,20)
\put(0,0){\line(1,1){20}}
\put(20,0){\line(-1,1){20}}
\put(10,10){\circle{5}}
\put(17,13){\line(-1,1){4}}

{\allinethickness{.8pt}
\put(25,10){\vector(1,0){10}}
\put(35,10){\vector(-1,0){10}}}

\put(40,0){\line(1,1){20}}
\put(60,0){\line(-1,1){20}}
\put(50,10){\circle{5}}
\put(47,3){\line(-1,1){4}}
\end{picture}
}
\def\rmovetiio#1{
\setlength{\unitlength}{#1}
\begin{picture}(40,20)
\put(5,0){\line(0,1){20}}
{
\put(2.,7){\line(1,0){6}} 
\put(2.,13){\line(1,0){6}} }
{\allinethickness{.8pt}
\put(15,10){\vector(1,0){10}}
\put(25,10){\vector(-1,0){10}}}
\put(35,0){\line(0,1){20}}
\end{picture}
}
\def\rmovetiiio#1#2{
\setlength{\unitlength}{#1}
\begin{picture}(70,20)

\put(0,0){\line(1,1){20}}
\put(20,0){\line(-1,1){9}}
\put(9,11){\line(-1,1){9}}

\put(7,3){\line(-1,1){4}}
\put(13,3){\line(1,1){4}}
\put(3,13){\line(1,1){4}}
\put(17,13){\line(-1,1){4}}

{\allinethickness{.8pt}
\put(25,10){\vector(1,0){10}}
\put(35,10){\vector(-1,0){10}}}

\qbezier(40,5)(48,20)(54,11)
\qbezier(56, 9)(62,0)(70,15)

\qbezier(40,15)(48,0)(55,10)
\qbezier(55,10)(62,20)(70,5)

\put(43,10){\circle{5}}
\put(67,10){\circle{5}}

\ifnum#2=2
\put(0,17){\path(0,0)(0,3)(3,3)}
\put(17,20){\path(0,0)(3,0)(3,-3)}

\put(15,8){\makebox{${\Huge c_{1}}$}}

\put(40,12){\path(0,0)(0,3)(3,3)}
\put(67,15){\path(0,0)(3,0)(3,-3)}

\put(53,3){\makebox{${\Huge c'_{1}}$}}
\fi

\end{picture}
}
\def\abscrs#1{
\setlength{\unitlength}{#1}
\begin{picture}(75,30)
\allinethickness{1pt}
\put(0,5){\line(1,1){20}}
\put(0,25){\line(1,-1){8}}
\put(20,5){\line(-1,1){8}}
{\allinethickness{1pt}
\put(25,15){\vector(1,0){10}}
}
{\allinethickness{1pt}
\qbezier(50,3)(55,13)(60,3)
\qbezier(50,27)(55,17)(60,27)
\qbezier(42,10)(52,15)(42,20)
\qbezier(67,10)(57,15)(67,20)
}

{\allinethickness{2pt}
\put(45,5){\line(1,1){20}}
\put(45,25){\line(1,-1){8}}
\put(65,5){\line(-1,1){8}}
}

\end{picture}
}
\def\absvcrs#1{
\setlength{\unitlength}{#1}
\begin{picture}(75,30)
\allinethickness{1pt}
\put(0,5){\line(1,1){20}}
\put(0,25){\line(1,-1){20}}
\put(10,15){\circle{5}}
{\allinethickness{1pt}
\put(25,15){\vector(1,0){10}}
}

\put(40,5){\line(1,1){20}}
\put(40,25){\line(1,-1){10}}
\put(60,5){\line(-1,1){5}}

{\allinethickness{2pt}
\put(45,5){\line(1,1){20}}
\put(45,25){\line(1,-1){7}}
\put(65,5){\line(-1,1){7}}
}

\put(50,5){\line(1,1){20}}
\put(50,25){\line(1,-1){5}}
\put(70,5){\line(-1,1){10}}

\end{picture}
}
\def\abstwt#1{
\setlength{\unitlength}{#1}
\begin{picture}(35,30)
\allinethickness{1pt}
\put(5,5){\line(0,1){20}}
{\allinethickness{.8pt}
\put(2,15){\line(1,0){6}}
}
{\allinethickness{1pt}
\put(12,15){\vector(1,0){8}}
}
{\allinethickness{2pt}
\put(30,5){\line(0,1){20}}
}
{\allinethickness{1pt}
\qbezier(27,5)(27,10)(30,15)
\qbezier(30,15)(33,20)(33,25)
\qbezier(33,5)(33,10)(31,13)
\qbezier(29,17)(27,20)(27,25)

}
\end{picture}
}
\def\cpmovei#1{
\setlength{\unitlength}{#1}
\begin{picture}(60,20)
\put(0,0){\line(1,1){20}}
\put(20,0){\line(-1,1){20}}
\put(10,10){\circle{5}}
\put(17,17){\circle*{2}}

{\allinethickness{.8pt}
\put(25,10){\vector(1,0){10}}
\put(35,10){\vector(-1,0){10}}}

\put(40,0){\line(1,1){20}}
\put(60,0){\line(-1,1){20}}
\put(50,10){\circle{5}}
\put(43,3){\circle*{2}}
\end{picture}
}
\def\cpmoveii#1{
\setlength{\unitlength}{#1}
\begin{picture}(40,20)
\put(5,0){\line(0,1){20}}
{
\put(5.,7){\circle*{2}} 
\put(5.,13){\circle*{2}} }
{\allinethickness{.8pt}
\put(15,10){\vector(1,0){10}}
\put(25,10){\vector(-1,0){10}}}
\put(35,0){\line(0,1){20}}
\end{picture}
}
\def\cpmoveiii#1{
\setlength{\unitlength}{#1}
\begin{picture}(70,20)

\put(0,0){\line(1,1){20}}
\put(20,0){\line(-1,1){9}}
\put(9,11){\line(-1,1){9}}

\put(3,3){\circle*{2}}
\put(17,3){\circle*{2}}
\put(17,17){\circle*{2}}
\put(3,17){\circle*{2}}

{\allinethickness{.8pt}
\put(25,10){\vector(1,0){10}}
\put(35,10){\vector(-1,0){10}}}

\put(40,0){\line(1,1){20}}
\put(60,0){\line(-1,1){9}}
\put(49,11){\line(-1,1){9}}

\end{picture}
}
\begin{document}
\title[Converting virtual link diagrams to normal ones]
{Converting virtual link diagrams to normal ones}
\author{Naoko Kamada} 
\thanks{This work was supported by JSPS KAKENHI Grant Number 15K04879.}
\address{ Graduate School of Natural Sciences,  Nagoya City University\\ 
1 Yamanohata, Mizuho-cho, Mizuho-ku, Nagoya, Aichi 467-8501 Japan
}

\date{}
\begin{abstract} 
A virtual link diagram is called normal if the associated abstract link diagram is checkerboard colorable, and a virtual link is normal if it has a normal diagram as a representative.
In  this paper, we introduce a method of converting a virtual link diagram to a normal virtual link diagram by use of  the double covering technique. 
We show that the normal virtual link diagrams obtained from two equivalent virtual link diagrams are related by generalized Reidemeister moves and Kauffman flypes.
We obtain a numerical invariant of virtual knots by using our converting method.
\end{abstract}

\maketitle


\section{Introduction}

L. H. Kauffman  \cite{rkauD} introduced virtual knot theory, which is a generalization of knot theory based on  Gauss  diagrams and link diagrams in closed oriented surfaces. 
Virtual links correspond to  stable equivalence classes of  links in thickened surfaces \cite{rCKS,rkk}. 
A virtual link diagram is called normal if the associated abstract link diagram is checkerboard colorable ($\S$ \ref{secnormal}). A virtual link is called normal if it has a normal diagram as a representative. Every classical link diagram is normal, and hence 
the set of classical link diagrams is a subset of that of normal virtual link diagrams. The set of normal virtulal link diagrams is a subset of that of virtual link diagrams. 
The $f$-polynomial (Jones polynomial) is an invariant of a virtual link \cite{rkauD}. It is shown in \cite{rkn0} that the $f$-polynomial of a normal virtual link has a property that the $f$-polynomial of a classical link has. 
This property may make it easier to define Khovanov homology of virtual links as stated in O. Viro \cite{rviro}.

In  this paper, we introduce a method of converting a virtual link diagram to a normal virtual link diagram by use of  the double covering technique defined in \cite{rkk7}. 
We show that the normal virtual link diagrams obtained from two equivalent virtual link diagrams by our method 
are related by generalized Reidemeister moves and Kauffman flypes.
We obtain a numerical invariant of virtual knots by using the converting method.
\section{Definitions and main results}\label{secnormal} 
A {\it virtual link diagram\/} is a generically immersed loops with information of positive, negative or virtual crossing, on its double points.  A {\it virtual crossing\/} is an encircled double point without over-under information \cite{rkauD}. 
A {\it twisted link diagram\/} is a virtual link diagram, possibly with {\it bars\/} on arcs. 
%
%
A {\it virtual link\/} (or {\it twisted link}) is an equivalence class of virtual (or twisted ) link diagrams under Reidemeister moves and virtual Reidemeister moves (or Reidemeister moves, virtual Reidemeister moves and twisted Reidemeister moves) depicted in Figures~ \ref{fgmoves}. We call Reidemeister moves and virtual Reidemeister moves  {\it generalized Reidemeister moves}. 
\begin{figure}[h]
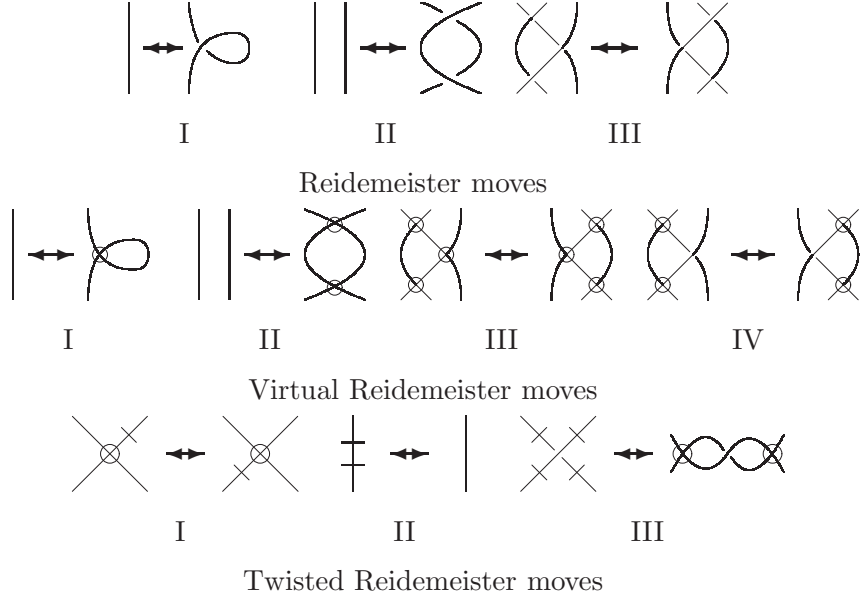

\centerline{
\begin{tabular}{ccc}
\rmoveio{.4mm}{1}&\rmoveiio{.4mm}{1}&\rmoveiiio{.4mm}{1}\\
I&II&III\\
\multicolumn{3}{c}{Reidemeister moves}
\end{tabular}}
\centerline{
\begin{tabular}{cccc}
\rmovevio{.4mm}{1}&\rmoveviio{.4mm}{1}&\rmoveviiio{.4mm}{1}&\rmovevivo{.4mm}{1}\\
I&II&III&IV\\
\multicolumn{4}{c}{Virtual Reidemeister moves}
\end{tabular}}
\centerline{
\begin{tabular}{ccc}
\rmovetti{.5mm}&\rmovetiio{.5mm}&\rmovetiiio{.5mm}{1}\\
I&II&III\\
\multicolumn{3}{c}{Twisted Reidemeister moves}
\end{tabular}}
\caption{Generalized Reidemeister moves and twisted Reidemeister moves}\label{fgmoves}
\end{figure}

We introduce the double covering diagram of a twisted link diagram. Let $D$ be a twisted link diagram. 
Assume that $D$ is on the right of the $y$-axis in the $xy$-plane and all bars are parallel to the $x$-axis with disjoint $y$-coordinates. Let  $D^*$ be  the twisted link diagram obtained from $D$ by reflection with respect to the $y$-axis and switching the over-under information of all classical crossings of $D$. Let $B=\{b_1,\dots, b_k\}$ be a set of bars of $D$ and we denote the bar of $D^*$ corresponding to $b_i$ by $b_i^*$.
See Figure~\ref{fgconvert1} (i). 
\begin{figure}[h]
\centering{
\includegraphics[width=5.5cm]{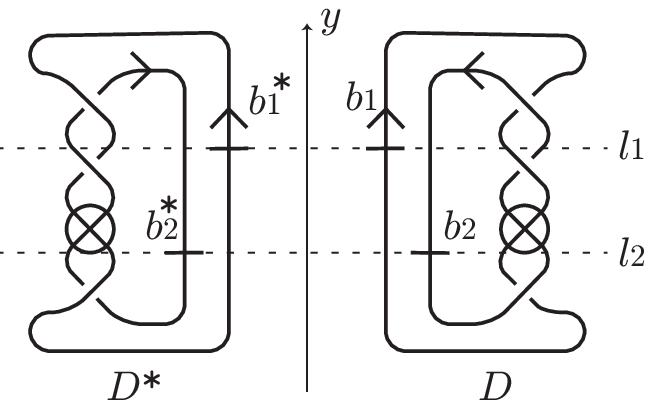}\hspace{1cm}
\includegraphics[width=5.cm]{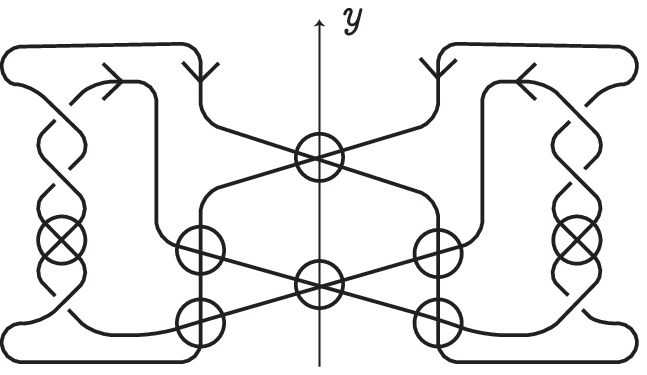}}\\
\centerline{(i)\hspace{6cm}(ii)}
\caption{The double covering of a twisted link diagram}\label{fgconvert1}
\end{figure}
For horizontal lines $l _1, \dots , l_k$ such that  $l_i$ contains $b_i$ and the corresponding bar $b_i^*$ of $D^*$, 
we  replace  each part of $D\amalg D^*$ in a neighborhood of $N(l_i)$ for each $i\in\{1,\dots, k\}$  as in Figure~\ref{fgconvert2}. We denote by $\phi(D)$ the virtual link diagram obtained this way.

For example, for the twisted link diagram $D$  depicted as in Figure~\ref{fgconvert1} (i), the virtual link diagram $\phi(D)$ is as in Figure~\ref{fgconvert1} (ii).
\begin{figure}[h]
\centering{
\includegraphics[width=8cm]{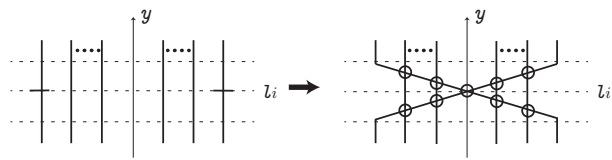}}
\caption{The replacement of diagram}\label{fgconvert2}
\end{figure}
We call this diagram $\phi(D)$ the {\it double covering diagram} of $D$.  
Then we have the followings.

\begin{thm}{\cite{rkk7}}\label{kkthm1}
Let $D_1$ and $D_2$ be twisted link diagrams.
If $D_1$ and $D_2$ are equivalent as a twiated link, then $\phi(D_1)$ and $\phi(D_2)$ are equivalent as a virtual link.
\end{thm}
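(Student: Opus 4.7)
The plan is to use the fact that twisted link equivalence is generated by a short list of local moves (Reidemeister I/II/III, virtual Reidemeister I--IV, and twisted Reidemeister I/II/III), together with ambient isotopies that preserve the horizontal-bar normal form. It therefore suffices to show that if $D_2$ is obtained from $D_1$ by a single such move or isotopy, then $\phi(D_1)$ and $\phi(D_2)$ are equivalent as virtual link diagrams. I would organize the proof as a case analysis over these generators.

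First I would check that $\phi$ is well-defined up to generalized Reidemeister moves under the ambient isotopies used to bring $D$ into the normal form (bars horizontal, at distinct heights, right of the $y$-axis). The key point is that sliding a bar $b_i$ along its strand to change its $y$-coordinate slides the corresponding gadget along the matching strand of $\phi(D)$, and this can be realized by a planar isotopy combined with virtual Reidemeister II and III moves that push the virtual crossings of the gadget past any crossings of $\phi(D)$ they encounter. This also takes care of permuting the heights of two bars.

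Next I would handle the Reidemeister and virtual Reidemeister moves. Any such move applied to $D$ is supported in a small disk $\Delta$ in the upper half-plane disjoint from the bars. Under the double cover construction the local modification occurs in $\Delta$, and simultaneously the mirror modification (with over/under switched) occurs in the mirror disk $\Delta^{*}$ in the lower half-plane. Since the mirror of a Reidemeister or virtual Reidemeister move is again such a move, applying one move in $\Delta$ and its mirror in $\Delta^{*}$ converts $\phi(D_1)$ into $\phi(D_2)$. This case reduces entirely to the observation that the class of generalized Reidemeister moves is closed under mirror reflection and under crossing-sign reversal.

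The main obstacle is the twisted Reidemeister moves, where bars are explicitly involved and the gadget appears in $\phi$. For Twisted II, the two parallel bars contribute two adjacent copies of the gadget in $\phi(D_1)$, and one verifies that the four virtual crossings so introduced cancel on each of the two strands of the double cover by two virtual R2 moves, yielding $\phi(D_2)$. For Twisted I, the bar sitting on a kink of $D$ produces under $\phi$ a small picture combining the gadget with a classical crossing and its mirror; this reduces to $\phi$ of the debarred diagram by a planar isotopy together with a virtual R1 and a virtual R2. For Twisted III, a bar adjacent to a virtual crossing produces a gadget next to a virtual crossing in $\phi$, and one checks pictorially that the virtual crossing can be pushed through the gadget by virtual R3 (and R4) moves. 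These final verifications are pictorial; the difficulty is not conceptual but lies in drawing the doubled diagrams carefully enough to read off the explicit sequence of generalized Reidemeister moves that realizes the equivalence in each of the three twisted cases.
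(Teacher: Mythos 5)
This theorem is not proved in the paper at all; it is imported from the reference \cite{rkk7}, so the only comparison available is with the strategy of that cited source, which is indeed the one you propose: reduce to the generating moves of twisted equivalence (plus the isotopies needed to restore the normal form with horizontal bars at distinct heights) and verify each generator pictorially. Your treatment of the well-definedness under bar-sliding and height permutation, and of the classical and virtual Reidemeister moves via the observation that a move in a disk $\Delta$ is accompanied by its mirror-and-crossing-switched copy in $\Delta^{*}$, is correct and is the standard argument.

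However, your case analysis of the twisted moves does not match the moves actually listed in Figure~\ref{fgmoves}. In this paper (following Bourgoin), twisted move I slides a bar past a \emph{virtual} crossing, twisted move II cancels two adjacent bars on one arc, and twisted move III replaces a \emph{classical} crossing flanked by four bars with a configuration involving the switched crossing conjugated by virtual crossings. Your ``Twisted I'' (a bar on a kink with a classical crossing) is not one of the generators, and your ``Twisted III'' (a bar adjacent to a virtual crossing) is actually twisted move I, which is the easy case (a detour move in $\phi(D)$). The case you have not addressed at all is the genuine twisted move III, and it is precisely the hardest one: under $\phi$ the four bars produce four swap gadgets surrounding a classical crossing and its mirror, and showing that the two sides of the move yield equivalent virtual diagrams requires a nontrivial explicit sequence of moves (this is also where, in the related Lemma~\ref{lem2} of the present paper, Kauffman flypes would enter if one were working with cut points rather than bars; for honest twisted move III the two sides are genuinely twisted-equivalent, so the covers are virtually equivalent, but the verification is the substantive content of the theorem). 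As written, your proposal omits the one case that carries the real difficulty, so it should be counted as having a gap there even though the overall architecture is right.
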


An {\it abstract link diagram} ({\it ALD})  is a pair of  a compact surface $\Sigma$ and a link diagram $D$ on $\Sigma$ such that the underlying 4-valent graph 
$|D|$ is a deformation retract of $\Sigma$, denoted by $(\Sigma, D_{\Sigma})$. 

%
%
We obtain an ALD from a twisted link diagram $D$ as in Figure~\ref{fg:virabs}.
Such an  ALD is called the {\it ALD associated with} $D$.
Figure~\ref{fig:extwtdiag} shows twisted link diagrams and the ALDs associated with them. 
For details on abstract link diagrams and their relations to virtual links, refer to \cite{rkk}.
\begin{figure}[h]
\centerline{
\begin{tabular}{ccc}
\abscrs{.4mm}&\absvcrs{.4mm}&\abstwt{.4mm}
\end{tabular}
}
\caption{The correspondence from a twisted link diagrams to an ALD}\label{fg:virabs}
\end{figure}
\begin{figure}[h]
\centerline{
\begin{tabular}{cccc}
\includegraphics[width=2cm]{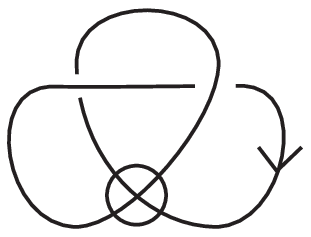}&
\includegraphics[width=2cm]{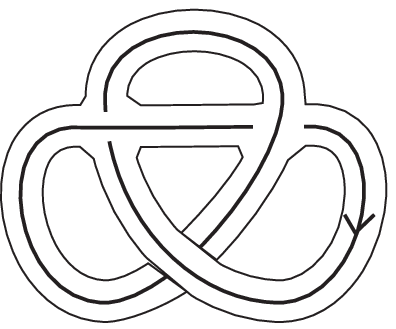}&
\includegraphics[width=2cm]{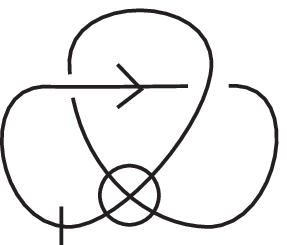}&
\includegraphics[width=2cm]{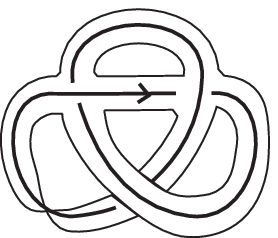}\\
(i)&(ii)&(iii)&(iv)\\
\end{tabular}}
\caption{Twisted link diagrams and  ALDa}\label{fig:extwtdiag}
\end{figure}
Let $D$ be a twisted link diagram and $(\Sigma, D_{\Sigma})$ the ALD associated with $D$. 
The diagram $D$ is said to be {\it normal} or {\it checkerboard colorable} if the regions of $\Sigma-|D_{\Sigma}|$ can be colored black and white such that colors of two adjacent regions are different. In Figure~\ref{fig:exccdiag}, we show an example of a normal diagram. A classical link diagram is normal. A twisted link is said to be {\it normal} if it has a normal twisted link diagram. Note that normality is not necessary to be 
preserved  under generalized Reidemeister moves. 
For example the virtual link diagram in the right of Figure~\ref{fgexdiag} is not normal and is equivalent to the trefoil knot diagram in the left which is normal. 
\begin{figure}[h]
\centerline{
\includegraphics[width=7cm]{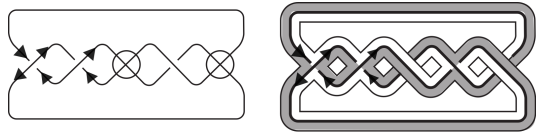}}
\caption{A normal twisted link  diagram and its associated ALD with a checkerboard coloring}\label{fig:exccdiag}
\end{figure}
\begin{figure}[h]
\centerline{
\includegraphics[width=5cm]{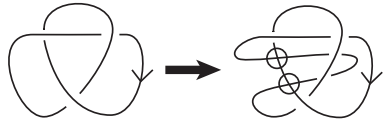}}
\caption{A diagram of a normal virtual link which is not normal}\label{fgexdiag}
\end{figure}
\begin{prop}\label{thm1}
For a normal twisted link diagram $D$, the double covering diagram of $D$  $\phi(D)$ is normal.
\end{prop}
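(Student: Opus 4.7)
The plan is to identify the ALD associated with $\phi(D)$ with the orientation double cover of the ALD associated with $D$, and then pull back the checkerboard coloring.

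Let $(\Sigma, D_{\Sigma})$ denote the ALD associated with $D$. Its non-orientability comes precisely from the M\"obius bands contributed by the bars of $D$, whereas the ALD associated with the virtual diagram $\phi(D)$ is orientable (no bars are present). By construction, $\phi(D)$ consists of $D$ together with its mirror $D^{*}$, reconnected at each pair of corresponding bars $b_i, b_i^{*}$ by the local move of Figure~\ref{fgconvert2}. Interpreted on ALDs, this removes the two M\"obius bands at $b_i$ in $\Sigma$ and at $b_i^{*}$ in $\Sigma^{*}$ and replaces them by a single untwisted annulus, while leaving the bar-free pieces of $\Sigma$ and $\Sigma^{*}$ as two disjoint orientable copies. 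This is exactly the local model of the orientation double cover $p\colon\widetilde{\Sigma}\to\Sigma$: each M\"obius band lifts to an annulus, and the orientable complement lifts to two disjoint copies.

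Granting this identification, the conclusion follows immediately. Given a checkerboard coloring $c$ of $\Sigma\setminus|D_{\Sigma}|$, I would set $\tilde c = c\circ p$ on $\widetilde{\Sigma}\setminus|\widetilde{D}_{\Sigma}|$. Since $p$ is a local homeomorphism, any two regions of $\widetilde{\Sigma}$ sharing an arc of $\widetilde{D}_{\Sigma}$ project to adjacent regions of $\Sigma$ separated by an arc of $D_{\Sigma}$, and hence receive opposite values of $c$. Therefore $\tilde c$ is a checkerboard coloring of the ALD of $\phi(D)$, so $\phi(D)$ is normal.

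The step I expect to require the most care is the identification of the replacement of Figure~\ref{fgconvert2} with the orientation double cover at each bar. This must be checked directly from the specific diagram: one needs that the virtual crossings introduced there connect the upper (resp.\ lower) side of the arc at $b_i$ to the lower (resp.\ upper) side at $b_i^{*}$, which is precisely the boundary identification realizing an annulus as the orientation double cover of a M\"obius band. Once this local model is confirmed, the rest of the argument is essentially formal.
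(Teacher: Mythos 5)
Your proof is correct, but it takes a more structural route than the paper, which simply exhibits the checkerboard coloring of the ALD of $\phi(D)$ directly: it colors the ALDs of $D$ and of $D^*$ (the latter with the mirrored coloring) and observes, in a figure, that the colorings match up across the replacement of Figure~\ref{fgconvert2}. You instead package the construction as a $2$-to-$1$ covering $p$ of ALDs and pull the coloring back, which is cleaner and explains \emph{why} the colorings must match: adjacency of regions is preserved under a local homeomorphism. Two small caveats. First, your argument does not actually need $p$ to be \emph{the} orientation double cover; any covering of the ALD of $D$ by the ALD of $\phi(D)$ carrying $|\widetilde{D}_\Sigma|$ onto $|D_\Sigma|$ suffices for the pullback, which sidesteps the (harmless but distracting) issue that the ALD of a barred diagram can still be orientable, in which case the cover classified by the mod-$2$ bar count is the trivial disconnected one. (That said, the identification with the orientation cover is genuinely correct here, since every piece of the ALD other than the half-twisted bands at bars is planar, so $w_1$ is exactly the mod-$2$ bar-counting class.) Second, the step you defer --- that the replacement of Figure~\ref{fgconvert2} joins the upper half of the band at $b_i$ in $D$ to the lower half of the band at $b_i^*$ in $D^*$ and vice versa, so that each half-twisted band has as preimage two untwisted bands running between the two sheets, compatibly with the mirror identification --- is precisely the content of the paper's Figure~\ref{fgconvert4}; it is the whole crux in either approach, and your proof is complete only once that local picture is verified. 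So the two arguments rest on the same local verification; yours buys a reusable covering-space statement, while the paper's buys brevity.
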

\begin{proof}
Let $D$ be a normal twisted  link diagram $D$. The twisted link diagram $D^*$ is obtained from $D$ by reflection and switching all classical crossings of $D$ as the previous manner as in Figure~\ref{fgconvert4} (i). The  ALDs obtained from $D$ and $D^*$ can be colored as in Figure~\ref{fgconvert4} (ii). Then we see that $\phi(D)$ is normal as in the right of Figure~\ref{fgconvert4} (ii). 
\end{proof}
\begin{figure}[h]
\centering{
\includegraphics[width=11cm]{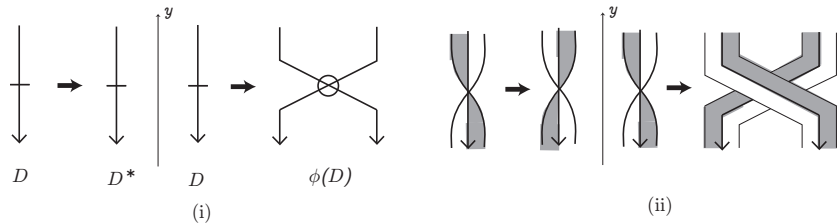} }
\caption{The converting diagram}\label{fgconvert4}
\end{figure}

%


%

H. Dye introduced the notion of cut points to a virtual link diagram in her talk presented in the Special Session 35, ``Low
Dimensional Topology and Its
Relationships with Physics", held in Porto, Portugal, June 10-13,
2015 as part of the 1st AMS/EMS/SPM Meeting.

Let $(D, P)$ be a pair of a virtual link diagram $D$ and a finite set $P$ of points on edges of $D$. 
We call the  ALD associated with the twisted link diagram obtained from $(D, P)$ by replacing all points of $P$ with bars,  the {\it ALD associated with} $(D, P)$.
 See Figure~\ref{fgcutpt} (ii) and (iii). 
If the ALD associated with $(D, P)$ is normal,  then we call the set of points $P$  a {\it cut system} of $D$ and call each point of $P$ a {\it cut point}.
For  the virtual link diagram  in Figure~\ref{fgcutpt} (i) we show an example of a cut system  in Figure~\ref{fgcutpt} (ii)  and the  ALD associated with it in Figure~\ref{fgcutpt} (iii). 

%
%

%
\begin{figure}[h]
\centerline{
\includegraphics[width=8cm]{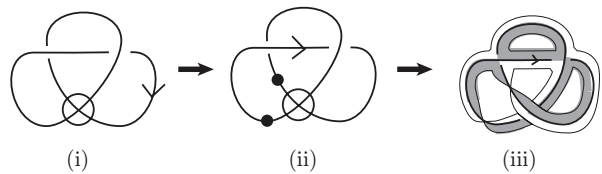}}
\caption{Example of cut points}\label{fgcutpt}
\end{figure}

A virtual link  diagram is said to admit an {\it alternate orientation} if it can be given an orientation  such that an  orientation of an edge switches at each classical crossing  as in Figure~\ref{fig:altori}. The virtual link diagram in Figure~\ref{fig:exccdiag} admits an alternate orientation. 
It is known that a virtual link diagram is normal (or checkerboard colorable) if and only if it admits an alternate orientation \cite{rkns}. 
\begin{figure}[h]
\centerline{
\includegraphics[width=5cm]{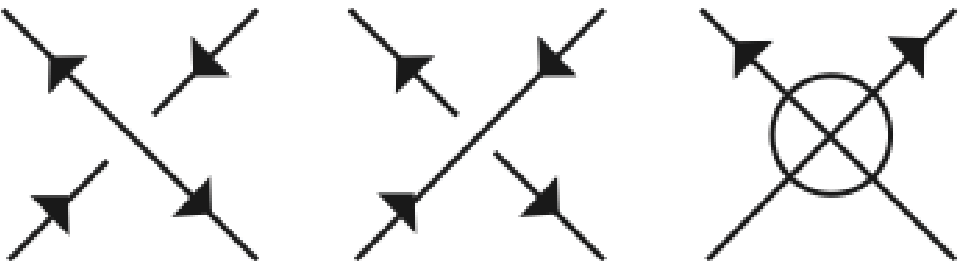}
}
\caption{Alternate orientation}\label{fig:altori}
\end{figure}
%


Note that a finite set $P$ of points on $D$ is a cut system if and only if $(D,P)$ admits an alternate orientation such that the orientations are as in Figure~\ref{fgalternateorip} at each crossing of $D$ and each point of $P$ (cf. \cite{rkn1,rkns}).
\begin{figure}[h]
\centerline{
\includegraphics[width=6cm]{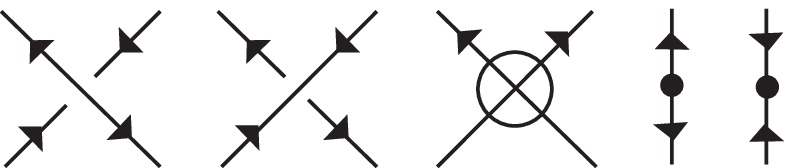}}
\caption{Alternate orientation}\label{fgalternateorip}
\end{figure}

The {\it canonical cut system} of a virtual link diagram $D$  is the set of points 
 that is obtained by 
giving two points in a neighborhood of each virtual crossing of $D$ as in Figure~\ref{fgVtoCC} (i). 
\begin{figure}[h]
\centerline{
\includegraphics[width=11cm]{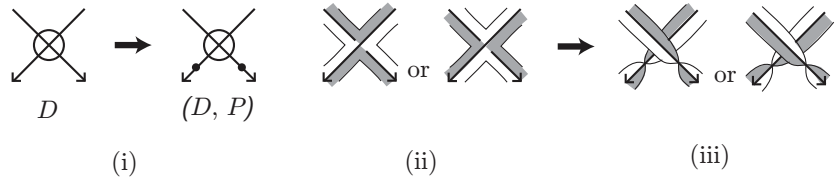}}
\caption{Canonical cut system of a virtual link diagram}\label{fgVtoCC}
\end{figure}
\begin{prop}\label{propcan}
The canonical cut system is a cut system.
\end{prop}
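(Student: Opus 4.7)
The plan is to apply the characterization of cut systems via alternate orientations recalled just before Figure~\ref{fgalternateorip}: it suffices to exhibit an alternate orientation on $(D,P)$ for the canonical cut system $P$. I will construct such an orientation by first passing to an auxiliary classical diagram.

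First, I produce from $D$ a classical diagram $D'$ by replacing each virtual crossing with the smoothing that separates the two strands in the plane, chosen so as to be compatible with the placement of the two canonical cut points near the crossing (in the sense that each cut point lies on a different arc of the smoothing, on opposite sides of the smoothing curve, as suggested by the local picture in Figure~\ref{fgVtoCC}). Since $D'$ is an ordinary classical link projection in $S^2$, it is automatically checkerboard colorable; a fixed checkerboard coloring determines an alternate orientation $\omega'$ of $D'$.

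Next, using the natural bijection between edges of $D$ and edges of $D'$ away from virtual crossings and cut points, I transport $\omega'$ to an orientation $\omega$ of $D$. I then verify that $\omega$ satisfies the alternate-orientation condition of Figure~\ref{fgalternateorip} on $(D,P)$: at each classical crossing the condition is immediate, since classical crossings are untouched by the passage $D \leadsto D'$; at each virtual crossing the two strands of $D$ carry coherent orientations, because they correspond to continuous arcs in $D'$; and at each canonical cut point the orientation of $\omega$ reverses, because the two arcs of $D$ meeting at the cut point correspond in $D'$ to arcs lying on opposite sides of the smoothing curve, across which the checkerboard coloring (and hence $\omega'$) switches.

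The main obstacle I anticipate is this last verification --- confirming that the canonical placement of two cut points near each virtual crossing is precisely what forces the transported orientation to reverse across them. This reduces to a purely local check in a neighborhood of a single virtual crossing together with its two canonical cut points, and can be read off directly from the local pictures used in Figures~\ref{fgalternateorip} and \ref{fgVtoCC}. Once this local compatibility is settled, $\omega$ is an alternate orientation of $(D,P)$, and by the characterization recalled above, the canonical cut system is a cut system.
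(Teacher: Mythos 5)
Your overall strategy (pass to an auxiliary classical diagram, borrow its alternate orientation, and transport it back) is sound and parallel to the paper's, which replaces each virtual crossing by a \emph{classical} crossing and transfers the checkerboard coloring directly. But the local verification, which is the entire content of the proposition, goes wrong at two of the three types of points: both claims you make there are false for the orientation $\omega$ you actually defined. First, at a virtual crossing the two strands of $D$ do \emph{not} correspond to continuous arcs of $D'$ --- the smoothing reconnects the four local ends in the other pairing, so each strand of $D$ meets two \emph{different} arcs of $D'$. Computing the checkerboard coloring around the smoothing, the two regions cut off by the smoothing arcs receive one color and the middle region the other, and the resulting ``black on the left'' orientation makes each strand of $D$ a source or a sink at the virtual crossing: the transported $\omega$ is \emph{anti}-coherent there, not coherent. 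Second, at a cut point the transported orientation does \emph{not} reverse: a cut point sits in the interior of an edge of $D$, and the two sub-arcs on either side of it map into one and the same arc of $D'$, along which $\omega'$ is constant, so $\omega$ passes straight through every cut point. Hence the $\omega$ you defined violates the condition of Figure~\ref{fgalternateorip} at every virtual crossing and at every cut point.

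The two failures cancel, and that cancellation is the actual proof: modify $\omega$ by reversing it on each short arc lying between a cut point and its virtual crossing. Because the canonical cut system places exactly one cut point on \emph{each} of the two strands through a given virtual crossing (Figure~\ref{fgVtoCC}~(i)), this introduces exactly one reversal per strand, which turns the source/sink configuration at the virtual crossing into a coherent one and simultaneously produces the required reversal at each cut point, while leaving all classical crossings untouched. That ``one cut point per strand'' count is exactly what makes the canonical cut system work --- it is the same count that, in the paper's version, lets one bar per band absorb the side-swap forced by the alternating colors around the classical crossing of $D_C$ --- and it is the point your write-up never isolates. As written, the proof asserts the conclusion of the local check rather than performing it, and the reasons offered for the two nontrivial cases are incorrect.
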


\begin{proof}
For a virtual link diagram $D$, let $D_C$ be a classical link diagram which is obtained from $D$ by replaceing all virtual crossings of $D$ with classical ones. Note that there is a checkerboard coloring for the ALD associated with $D_C$. At each classical crossing, the checkerboard coloring is as in Figure~\ref{fgVtoCC} (ii). 
Let $P$ be the canonical cut system of $D$. 
The ALD associated with $(D, P)$  is checkerboard colorable such that it's coloring is inherited from that of $D_C$ as in Figure~\ref{fgVtoCC} (ii) and (iii).
\end{proof}

Dye introduced the cut point moves  depicted in Figure~\ref{fgcutptmove} and asked
whether two cut systems of a virtual link diagram $D$ are related by a sequence of cut point moves. The following theorem answers it.

\begin{figure}[h]
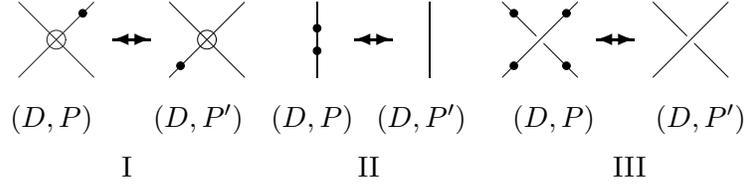

\centerline{
\begin{tabular}{ccc}
\cpmovei{.5mm}&\cpmoveii{.5mm}&\cpmoveiii{.5mm}\\
$(D,P)$\hspace{.8cm}$(D, P')$&$(D,P)$\hspace{.3cm}$(D, P')$&$(D,P)$\hspace{.8cm}$(D, P')$\\
I&II&III\\
\end{tabular}
}
\caption{The cut point moves}\label{fgcutptmove}
\end{figure}
\begin{thm}\label{thmc1}
For a virtual link diagram $D$, two cut systems of  $D$ are related by a sequence of cut point moves I, II and III.
\end{thm}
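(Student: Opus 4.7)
I plan to associate a finite combinatorial datum to every cut system and then show that the three cut point moves act transitively on it. To each cut system $P$ of $D$ I associate two objects: the parity function $\sigma_P:E(|D|)\to\mathbb{Z}/2$ defined on each edge of the 4-valent graph $|D|$ by $\sigma_P(e):=|P\cap e|\bmod 2$, and the crossing orientation datum $\chi_P$, extracted from the alternate orientation of $(D,P)$ guaranteed by Figure~\ref{fgalternateorip}. At each classical crossing $\chi_P$ records which of the two ``adjacent pair incoming'' configurations is chosen, and at each virtual crossing it independently records a direction for each of the two strands passing through, giving four options. Once $\chi_P$ is fixed, $\sigma_P(e)$ is forced on each edge by the consistency of the alternate orientation along $e$: it must be $0$ if the endpoint choices agree along $e$, and $1$ otherwise. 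Conversely, any consistent assignment $\chi$ of crossing choices is realized by some cut system, obtained by placing the requisite number of cut points on each edge.

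Next, I analyze how the three moves act on $(\chi_P,\sigma_P)$. Move II alters $|P\cap e|$ by $\pm 2$ on a single edge while leaving both $\chi_P$ and $\sigma_P$ unchanged. A local check of the alternate orientation shows that move I (shifting a single cut point along one strand across a virtual crossing $v$) flips the direction choice of that strand in $\chi_P$, toggling $\sigma_P$ on the two edges of the strand adjacent to $v$; similarly move III (adding or removing one cut point near each of the four corners of a classical crossing $v$) flips the classical configuration at $v$ in $\chi_P$, toggling $\sigma_P$ on the four incident edges. Moreover, any such elementary flip can be effected even when the starting cut-point configuration does not exactly match Figure~\ref{fgcutptmove}: first use move II to introduce an auxiliary adjacent pair of cut points next to the target crossing, then apply move I or III, then use move II again to discard any leftover pair.

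Given two cut systems $P$ and $P'$, the proof then concludes in two steps. First, I apply moves I and III to $P$, one at a time, to flip each crossing choice where $\chi_P\neq\chi_{P'}$; the resulting cut system $P_1$ has $\chi_{P_1}=\chi_{P'}$, hence $\sigma_{P_1}=\sigma_{P'}$, so $|P_1\cap e|\equiv|P'\cap e|\pmod 2$ on every edge $e$. Second, combining move II with planar isotopy along edges, I match the counts and positions of cut points edge by edge, transforming $P_1$ into $P'$. The main obstacle will be the careful local analysis in the second paragraph: verifying that moves I and III act on $\chi_P$ and $\sigma_P$ exactly as claimed, and that, augmented by move II, each independent crossing flip is genuinely achievable starting from an arbitrary cut system. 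Once this local computation is settled, the transitivity of crossing-choice flips across the whole diagram finishes the argument.
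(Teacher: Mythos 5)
Your proof is correct and follows essentially the same strategy as the paper's: use move III at classical crossings and move I (padded by move II) at virtual crossings to make the alternate orientations agree locally at every crossing, which forces the cut-point parities on all edges to agree, and then finish with moves II and isotopy along edges. The only real difference is that you make the bookkeeping explicit via the pair $(\chi_P,\sigma_P)$, where the paper argues directly with the two alternate orientations $\mathcal{O}$ and $\mathcal{O}'$.
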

\begin{proof}
Let $P$ and $P'$ be two cut systems of $D$. 
For $(D,P)$ and $(D,P')$, give  alternate orientations ${\cal O}$ and ${\cal O}'$, respectively. 
Let $c_1, c_2, \dots , c_m$ be classical crossings of $D$ where the orientations of edges of $\cal O$ are different from those of 
$\cal O'$. 
Apply cut point moves III  at $c_1, c_2, \dots , c_m$ to $(D,P)$, then we obtain the cut system $P''$ of $D$. 
There is an alternate orientation of $(D,P'')$, say $\cal O''$,  such that each classical crossing in $D$ admits the same 
orientation to that of $\cal O'$. 
Applying some cut point moves I to $P''$, we have a cut system $P'''$ such that for each edge $e$ of $D$, the number of cut points on $e$ in $(D,P''')$ is congruent to that in $(D,P')$ modulo $2$. Then $(D,P')$ is obtained from $(D,P''')$ by cut point moves I and II. 
\end{proof}

\begin{cor}[H. Dye]\label{cor1}
For any virtual link diagram with a cut system,  the number of cut points is 
even.
\end{cor}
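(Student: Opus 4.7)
The plan is to use Theorem \ref{thmc1} to reduce to a single reference cut system, namely the canonical cut system, and then track how the parity of $|P|$ changes under the three cut point moves. The canonical cut system of a virtual link diagram $D$ assigns exactly two cut points to each virtual crossing, so its cardinality is $2 v(D)$, where $v(D)$ denotes the number of virtual crossings of $D$. In particular, the canonical cut system has an even number of cut points.

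Next I would go through the moves in Figure~\ref{fgcutptmove} and record the change in $|P|$ for each. Move I relocates a single cut point across a virtual crossing, so $|P|$ is unchanged. Move II inserts or deletes two cut points on an edge, so $|P|$ changes by $\pm 2$. Move III exchanges the four cut points surrounding a classical crossing on one pair of opposite arcs with the four cut points on the other pair (equivalently, it inserts or deletes four cut points, depending on which representative one chooses for the local configuration), so $|P|$ changes by an even number. In every case the parity of $|P|$ is preserved.

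Now let $P$ be any cut system of $D$, and let $P_0$ be the canonical cut system of $D$. By Theorem~\ref{thmc1}, $P$ and $P_0$ are related by a finite sequence of cut point moves I, II, III. Since each such move preserves the parity of the number of cut points, we conclude
\[
|P| \equiv |P_0| = 2 v(D) \equiv 0 \pmod{2},
\]
so $|P|$ is even.

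The only place that requires a little care is Move III: one has to verify from the picture that the transformation genuinely alters the cut point count by an even number (four, in the convention of Figure~\ref{fgcutptmove}), rather than, say, shifting cut points while keeping the total count the same. Once that accounting is settled, the rest of the argument is immediate from Theorem~\ref{thmc1} together with the even cardinality of the canonical cut system.
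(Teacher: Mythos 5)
Your proof is correct and follows essentially the same route as the paper: the canonical cut system has an even number of cut points, each cut point move (relocation for I, $\pm 2$ for II, $\pm 4$ for III) preserves parity, and Theorem~\ref{thmc1} connects any cut system to the canonical one. The paper's own proof is exactly this argument, stated more briefly.
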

\begin{proof}
The number of cut points of  the canonical cut system is even. Since cut point moves do not change the parity of the number of cut points, we obtain the result.
\end{proof}


Let $(D,P)$ be a virtual link diagram with a cut system. 
We replace all points of $P$ with bars. Then we obtain a normal twisted link diagram. We denote such a map from the set of virtual link diagrams with cut systems to that of twisted link diagrams by $t$. 
We denote the image of $(D, P)$ under $t$ by $t(D, P)$. 
The double covering of  $t(D,P)$ is normal from Proposition~\ref{thm1} since $t(D, P)$ is normal. 
For a virtual link diagram with a cut system $(D,P)$  the double covering diagram of $t(D, P)$ is called the {\it converted normal diagram} of $(D, P)$, denoted by $\phi(D, P)$. 
The local replacement of a virtual link diagram depicted in Figure~\ref{fgkflype} is called a {\it Kauffman flype} or a {\it K-flype}.
 If a virtual link diagram $D'$ is obtained from $D$ by a finite sequence of generalized Reidemeister moves and K-flypes, then they are said to be {\it K-equivalent}.
\begin{figure}[h]
\centering{
\includegraphics[width=3cm]{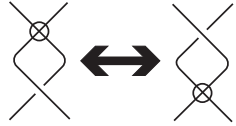} }
\caption{Kauffman flype}\label{fgkflype}
\end{figure}
\begin{rem}
The $f$-polynomials of K-equivalent virtual link diagrams are the same \cite{rkauD}. 
For a virtual link diagram of $D$, if a virtual link diagram $D'$ is obtained from $D$ by a K-flype at a classical crossing $c$, then the sign of the corresponding classical crossing $c'$ of $D'$ is the same as that of $c$. 
If $D$ is normal, then $D'$ is normal. 
\end{rem}

The following is our main theorem.
\begin{thm}\label{thm2}
Let $(D,P)$ and $(D',P')$ be virtual link diagrams with cut systems. If $D'$ is equivalent (or K-equivalent) to $D$, 
then the converted normal diagram $\phi(D,P)$ is K-equivalent to $\phi(D',P')$.
\end{thm}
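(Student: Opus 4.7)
The plan is to decouple the two sources of change---varying the cut system $P$ and varying the underlying virtual link diagram $D$---and to handle each in turn, reducing as much as possible to Theorem~\ref{kkthm1}. First, I would show that, for a fixed $D$, the converted normal diagram $\phi(D,P)$ depends on the choice of cut system $P$ only up to K-equivalence. Given two cut systems $P$ and $P'$ of $D$, Theorem~\ref{thmc1} says they are related by a finite sequence of cut point moves I, II, III, so it suffices to treat each move individually. Under the map $t$, a cut point is replaced by a bar, and each cut point move corresponds to a local move on twisted link diagrams: cut point move II (cancellation of two adjacent cut points on an arc) corresponds to twisted Reidemeister move II; cut point move I (sliding a cut point across a virtual crossing) corresponds to twisted Reidemeister move III; and cut point move III (cancellation of four cut points at a virtual crossing) is realised by a short combination of twisted Reidemeister moves III and II. Thus $t(D,P)$ and $t(D,P')$ are equivalent as twisted link diagrams, and Theorem~\ref{kkthm1} yields equivalence of $\phi(D,P)=\phi(t(D,P))$ and $\phi(D,P')=\phi(t(D,P'))$ as virtual links, hence K-equivalence.

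By induction on the number of moves, it then suffices to treat the case where $D'$ is obtained from $D$ by a single generalized Reidemeister move or a single K-flype. Using the first step, we are free to replace $P$ and $P'$ by any convenient cut systems on $D$ and $D'$. For each of the seven generalized Reidemeister moves (classical I, II, III and virtual I, II, III, IV), choose cut systems $P_0$ on $D$ and $P_0'$ on $D'$ that agree outside the local disk of the move and contain no cut points inside it. A case-by-case inspection shows that $t(D,P_0)$ and $t(D',P_0')$ then differ by the same move, now viewed as a generalized Reidemeister move in the twisted category; applying Theorem~\ref{kkthm1} again gives virtual-link equivalence of $\phi(D,P_0)$ and $\phi(D',P_0')$, which is stronger than K-equivalence. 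Combining with the first step finishes the Reidemeister case.

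The remaining case, a K-flype, is the most delicate because a K-flype is not a twisted link equivalence and so Theorem~\ref{kkthm1} does not apply directly. Choose $P_0, P_0'$ with no cut points in the disk where the K-flype occurs; then $t(D,P_0)$ and $t(D',P_0')$ differ by a single K-flype at a classical crossing, now in the twisted setting. The double covering construction glues the original twisted diagram and its mirror $(\,\cdot\,)^*$ across the bars, and since the bars lie outside the K-flype disk, the K-flype is copied in both sheets: the mirror copy of a K-flype, after reflection and crossing switch, is again a K-flype at the corresponding classical crossing. Hence $\phi(D',P_0')$ is obtained from $\phi(D,P_0)$ by exactly two K-flypes, one in each sheet, so the two diagrams are K-equivalent. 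The main obstacle of the whole argument lies in the case-by-case verification of the second step: for each generalized Reidemeister move---in particular virtual Reidemeister move IV, which mixes classical and virtual crossings---one must confirm that cut systems with no cut points in the move disk exist on both $D$ and $D'$; where the boundary data forces cut points inside the disk, the discrepancy is absorbed by cut point moves via the first step before the twisted-link equivalence argument is applied.
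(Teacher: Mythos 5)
Your overall architecture is the same as the paper's: first show that for fixed $D$ the diagram $\phi(D,P)$ is independent of the cut system up to K-equivalence (using Theorem~\ref{thmc1} to reduce to single cut point moves), then handle a single generalized Reidemeister move or K-flype with a convenient choice of cut systems. However, there is a genuine gap at the single most important point of the argument, namely cut point move III. First, a misreading: cut point move III takes place at a \emph{classical} crossing (four cut points, one on each of the four arms), not at a virtual crossing, and it is cut point move I that slides a cut point through a virtual crossing (this corresponds to twisted Reidemeister move I, not III). More seriously, your claim that cut point move III ``is realised by a short combination of twisted Reidemeister moves III and II'' is false. Twisted Reidemeister move III does not delete the four bars while leaving the classical crossing intact; it replaces the crossing-with-four-bars by a different local tangle (essentially the switched crossing conjugated by two virtual crossings), and no application of move II returns you to the bare original crossing. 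The difference between ``crossing with no bars'' and ``switched crossing conjugated by virtual crossings'' is precisely a virtualization/K-flype, which is \emph{not} a consequence of twisted Reidemeister moves. Consequently $t(D,P)$ and $t(D,P')$ are in general \emph{not} equivalent as twisted links when $P$ and $P'$ differ by a cut point move III, and Theorem~\ref{kkthm1} cannot be invoked there. Your argument, if it worked, would prove that $\phi(D,P)$ and $\phi(D,P')$ are equivalent as virtual links, which is stronger than the theorem asserts; the K-flypes in the statement are forced exactly by this case. The paper instead computes the double covers of the two sides of cut point move III directly and exhibits an explicit sequence of virtual Reidemeister moves \emph{and K-flypes} relating them (Figure~\ref{fgprflem21}); some such direct argument is unavoidable.

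The same issue propagates into your second step: for virtual Reidemeister move IV the local disk contains a classical crossing, and the canonical (or any) cut systems on the two sides can be forced to differ by a cut point move III, so K-flypes enter there as well; your hedge that the discrepancy ``is absorbed by cut point moves via the first step'' relies on the flawed first step. Your treatment of the classical Reidemeister moves, of virtual moves I--III, and of the K-flype case (where the move is copied, mirrored and crossing-switched, into the second sheet, yielding K-equivalence) is sound and matches the paper in spirit, although the paper in fact shows the two double covers of a K-flype are related by virtual Reidemeister moves alone. To repair the proof you must supply the explicit verification that the double covers of the two sides of cut point move III are K-equivalent; everything else then goes through.
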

For a 2-component of virtual link diagram  $D$, the half of the sum of signs of non self-crossings of $D$ is said to be the {\it linking number} of $D$.  The following is clear.
\begin{prop}\label{prop4}
The linking number is invariant under the generalized Reidemeister moves and K-flypes.
\end{prop}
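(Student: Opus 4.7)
The plan is to verify invariance move by move, noting that for each move we only need to track (a) which classical crossings are created, destroyed, or altered, and (b) whether those crossings contribute to the non-self-crossing sum. Since the linking number is one half the signed count of crossings between distinct components, a move preserves it as long as the net signed contribution from inter-component crossings is zero.

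First I would dispose of all the virtual-type moves and the easy classical ones. The virtual Reidemeister moves I, II, III involve only virtual crossings, and so the multiset of classical crossings (together with their signs and the components they involve) is unchanged; the sum is trivially preserved. Virtual Reidemeister move IV simply slides a classical crossing past a virtual crossing: the sign and the pair of strands meeting at that crossing are unchanged, so its contribution to the sum is unaffected. Classical Reidemeister move I inserts or deletes a single crossing formed by an arc with itself, hence a self-crossing, which never contributes to the sum. Classical Reidemeister move II inserts or deletes two crossings of opposite signs between the same pair of strands; either both are self-crossings (no contribution) or both are non-self and their signs cancel. Classical Reidemeister move III preserves the number of crossings, their signs, and the pairs of components involved, so the sum is unchanged.

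For the Kauffman flype, I would appeal directly to the observation recorded in the remark preceding the statement: the crossing $c$ affected by the K-flype corresponds to a crossing $c'$ of the new diagram with the same sign. Since the flype is a local move in a tangle box and merely transports the crossing across the tangle by reflection, it does not change which two components meet at $c$; hence its contribution to the non-self-crossing sum is unaltered. All other classical crossings outside the flype box are untouched.

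The only mild subtlety is keeping the bookkeeping honest when two strands of a move could belong to the same or different components; but in every move the two strands involved in a given crossing are preserved as the same two strands after the move (possibly reflected), so the self vs.\ non-self status is intrinsic to the crossing. Combining the cases yields the invariance of the linking number, completing the proof.
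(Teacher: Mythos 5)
Your proof is correct, and it fills in exactly the routine case-by-case check that the paper leaves implicit (the paper simply states ``The following is clear'' and gives no proof). Each move is handled with the right observation: purely virtual moves and the mixed move leave the classical crossing data untouched, R1 creates only a self-crossing, R2 creates a cancelling pair on the same two strands, R3 and the K-flype preserve signs and the components involved.
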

We have the following theorems from our main theorem and Proposition~\ref{prop4}.
\begin{thm}\label{thmk1}
Let $(D,P)$ be a virtual knot diagram with a cut system. 
Then $\phi(D,P)$ is a 2-component virtual link diagram and the linking number of $\phi(D,P)$ is an invariant of the virtual knot represented by $D$.
\end{thm}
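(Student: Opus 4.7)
The plan is to establish the two claims of Theorem~\ref{thmk1} separately. The first claim, that $\phi(D,P)$ has exactly two components, is a structural statement about the double-covering construction and will require a direct combinatorial argument. The second claim, invariance of the linking number, will then follow essentially formally from the main theorem (Theorem~\ref{thm2}) and Proposition~\ref{prop4}.

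For the component count, I would argue as follows. The diagram $t(D,P)$ is a twisted knot diagram with $|P|$ bars. By construction, $\phi(D,P) = \phi(t(D,P))$ is built from two copies $D$ and $D^*$ of the original knot, modified in neighborhoods of corresponding bar pairs $(b_i,b_i^*)$ according to the replacement of Figure~\ref{fgconvert2}. First I would verify, by direct inspection of that replacement, that the effect at each bar is to swap the $D$-strand with the $D^*$-strand. Granting this, as one traverses $D$ from a basepoint, each bar one crosses toggles between the $D$-sheet and the $D^*$-sheet; by Corollary~\ref{cor1} the number $|P|$ of bars is even, so after one full traversal one returns to the original sheet. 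This closes up one component of $\phi(D,P)$, and the symmetric traversal starting on $D^*$ produces a second. Since $\phi(D,P)$ has no other strands, it is exactly a two-component virtual link diagram.

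For the invariance part, let $D'$ be any virtual knot diagram equivalent to $D$, and let $P'$ be any cut system of $D'$ (at least the canonical one exists, by Proposition~\ref{propcan}). By Theorem~\ref{thm2}, $\phi(D,P)$ and $\phi(D',P')$ are K-equivalent, and by the previous paragraph both are two-component diagrams. Proposition~\ref{prop4} then tells us that their linking numbers agree. Taking $D = D'$ in particular shows that the linking number is independent of the choice of cut system on a fixed diagram, so the linking number of $\phi(D,P)$ depends only on the virtual knot class represented by $D$.

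The main obstacle is the sheet-swapping verification in the second paragraph: this is the only non-formal input, and it rests on a careful reading of the double cover construction of Section~\ref{secnormal}. Once that is settled, Corollary~\ref{cor1} immediately forces closure into two components, and the invariance assertion is an automatic consequence of Theorem~\ref{thm2} and Proposition~\ref{prop4}.
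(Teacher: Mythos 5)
Your proposal is correct, and the second half (invariance via Theorem~\ref{thm2} plus Proposition~\ref{prop4}) is exactly the paper's argument, recorded there as Lemma~\ref{thm5}. Where you differ is in the component count. The paper proves this as Lemma~\ref{lem3}, working entirely with Gauss diagrams and inducting on the number $n$ of cut-point pairs: the base case $n=1$ is checked by inspection of the local replacement, and the inductive step performs the replacement at the last two pairs of points of a diagram already known to split correctly. You instead give a direct traversal argument: each bar replacement swaps the $D$-sheet with the $D^*$-sheet, so a walk along the knot toggles sheets at every cut point, and evenness of $|P|$ (Corollary~\ref{cor1}) forces the walk to close up after one circuit, yielding exactly two components. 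Both arguments hinge on the same local fact (the sheet swap at a bar, which is what Figure~\ref{fggaussrep} encodes), and your parity argument is arguably more transparent. What the paper's induction buys, and what you should be aware of, is the \emph{refined} conclusion of Lemma~\ref{lem3}: if $A_i$ lies in one component then $A_{i+1}$ and $A_i^*$ lie in the other. This alternation is not needed for Theorem~\ref{thmk1} itself, but it is the key input to the proof of Theorem~\ref{thmk2} (identifying the linking number with the odd writhe). Your traversal implicitly establishes the same alternation (the component through $A_1$ is $A_1, A_2^*, A_3, A_4^*,\dots$), so if you record that observation explicitly your route recovers everything the paper's lemma provides. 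One small point to tighten: after producing the first closed component you should note that the ``symmetric'' traversal starting on the arc of $D^*$ over your basepoint is genuinely disjoint from the first component and that the two together exhaust all arcs; this is immediate from the alternation pattern but deserves a sentence.
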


The odd writhe is a numerical invariant of virtual knots \cite{rkauE}. We recall the definition of the odd writhe in Section~\ref{property}. 
\begin{thm}\label{thmk2}
Let $(D,P)$ be a virtual knot diagram with a cut system. The linking number of $\phi(D,P)$ coincides to the odd writhe of $D$.
\end{thm}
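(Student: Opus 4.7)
The plan is to reduce to the canonical cut system, identify the classical crossings of $D$ that contribute to non-self-crossings of $\phi(D,P)$, match that parity condition to Kauffman's notion of odd crossing via Gauss's realizability condition, and finish with a sign count.

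First I would reduce to the canonical cut system $P_c$ of Proposition~\ref{propcan}. Theorem~\ref{thm2}, applied with $D'=D$ and varying cut systems, shows that $\phi(D,P)$ and $\phi(D,P_c)$ are K-equivalent, and Proposition~\ref{prop4} tells us the linking number is K-invariant, so I may assume $P = P_c$. Traversing the underlying knot $D$ from a base point, the double covering construction switches between the $D$-copy and the $D^*$-copy at each cut point; since $|P|$ is even (Corollary~\ref{cor1}) the traversal closes up, recovering the fact that $\phi(D,P)$ has two components. For a classical crossing $c$ of $D$ the two strands through $c$ lie on the same component of $\phi(D,P)$ if and only if the number $n_P(c)$ of cut points in one of the two arcs of $D$ cut off by the two passes of $c$ is even; when $n_P(c)$ is odd, both the image of $c$ in the $D$-part and its mirror image in the $D^*$-part are non-self-crossings of $\phi(D,P)$. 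Since reflection reverses the sign of a classical crossing and switching over/under reverses it again, those two crossings each have sign $\mathrm{sign}(c)$, so such a $c$ contributes $2\,\mathrm{sign}(c)$ to the sum of signs of non-self-crossings, i.e., $\mathrm{sign}(c)$ to the linking number; crossings with $n_P(c)$ even contribute $0$.

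Next, I would identify $n_P(c) \bmod 2$ with the Kauffman parity of $c$. For the canonical cut system every visit to a virtual crossing of $D$ passes exactly one cut point, so $n_P(c)$ equals the number of virtual-crossing visits in the chosen arc, which modulo $2$ equals the number of virtual crossings with exactly one pass in the arc, i.e., the number of virtual chords linking the chord of $c$ in the extended Gauss diagram of $D$ (where both classical and virtual self-intersections are treated as chords). Viewed as a generic closed curve in the plane (forgetting over/under information and encircled markings), $D$ is a planar immersion of $S^1$, so by Gauss's realizability condition every chord of its extended Gauss diagram is linked by an even number of other chords. Applied to $c$ this gives
\[
(\text{\# classical chords linking } c) + (\text{\# virtual chords linking } c) \equiv 0 \pmod 2,
\]
so $n_P(c)$ is odd if and only if the number of classical chord endpoints strictly between the two endpoints of the chord of $c$ is odd, which is exactly the condition for $c$ to be an odd crossing of $D$.

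Assembling the pieces, the linking number of $\phi(D,P)$ equals $\sum_{c:\ n_P(c)\ \text{odd}} \mathrm{sign}(c) = \sum_{c\ \text{odd}} \mathrm{sign}(c)$, which is the odd writhe of $D$. The main obstacle is the parity identification in the previous paragraph; once Gauss's realizability is invoked to exchange ``virtual chords linking $c$'' with ``classical chords linking $c$'' modulo $2$, the rest is a straightforward bookkeeping of components and signs.
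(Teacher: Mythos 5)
Your proof is correct, but the central parity step is handled by a genuinely different argument than the one in the paper. The paper works with an \emph{arbitrary} cut system $P$: from the alternate-orientation characterization of cut systems it extracts the condition that, for each classical crossing $c$, the number of cut points plus the number of chord endpoints on the arc $I_c$ is even, and then invokes Lemma~\ref{lem3} to conclude that $c$ is odd exactly when its two passes land on different components of $\phi(D,P)$. You instead first reduce to the canonical cut system (via Theorem~\ref{thm2}/Lemma~\ref{lem2} and Proposition~\ref{prop4} -- legitimate, and the odd writhe does not involve $P$), and then identify the parity of the cut-point count with Kauffman's parity by passing through the extended Gauss diagram and Gauss's evenness condition for generic planar curves (each chord is linked with an even number of other chords), which trades ``virtual chords linking $\gamma_c$'' for ``classical chords linking $\gamma_c$'' mod $2$. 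Both routes are sound; the paper's is uniform in $P$ and reuses machinery it has already set up, while yours is more elementary and self-contained at the key step. Two small points you should make explicit: (1) your claim that each visit to a virtual crossing passes exactly one canonical cut point depends on the precise placement in Figure~\ref{fgVtoCC}(i) -- one cut point on each of the two strands through the virtual crossing; if both points sat on the same strand the count would be $0$ or $2$ per visit and the argument would break, so this should be justified from the definition (e.g., via the inherited alternate orientation in the proof of Proposition~\ref{propcan}); (2) the sign bookkeeping tacitly uses that the replacement at the bars introduces only virtual crossings, so that the non-self classical crossings of $\phi(D,P)$ are exactly the $\tilde c$ and $\tilde c^{*}$ you identify -- true, but worth a sentence.
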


\section{Proof of Theorem \ref{thm2}}
Theorem~\ref{thm2} is obtained from Lemmas~\ref{lem2} and \ref{lem1} stated below. 
\begin{lem}\label{lem2}
Let $D$ be a virtual  link diagram. Suppose that $P$ and  $P'$ are two cut systems of $D$.
Then the converted normal diagrams $\phi(D,P)$ and $\phi(D,P')$ are K-equivalent.
\end{lem}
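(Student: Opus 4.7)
The plan is to use Theorem~\ref{thmc1} to reduce the statement to the case where $P$ and $P'$ differ by a single cut point move. Once this reduction is made, it suffices to verify K-equivalence of $\phi(D,P)$ and $\phi(D,P')$ for each of the three local moves I, II, III in Figure~\ref{fgcutptmove}.

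For cut point moves I and II, I would argue that the two twisted link diagrams $t(D,P)$ and $t(D,P')$ are actually equivalent as twisted links. Indeed, move II produces two bars on a single arc which cancel by twisted Reidemeister move II, and move I produces a pair of bars flanking a virtual crossing, which can be slid across the virtual crossing by a twisted Reidemeister move III combined with a virtual Reidemeister move. In both cases Theorem~\ref{kkthm1} then gives that $\phi(D,P)$ and $\phi(D,P')$ are equivalent as virtual links, hence in particular K-equivalent.

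The main case, and the only one where a genuine Kauffman flype appears, is cut point move III. Here $P$ and $P'$ differ at four points placed on the four arcs meeting a classical crossing $c$, and $t(D,P)$ and $t(D,P')$ are twisted link diagrams that coincide outside a small disk around $c$ and differ inside that disk by having the four bars placed on the two opposite pairs of arcs. I would then draw, inside a fixed neighborhood of the horizontal lines $l_i$ used to form the double cover, the local pictures of $\phi(t(D,P))$ and $\phi(t(D,P'))$. Each of these is a tangle consisting of one copy of the crossing $c$, one copy of the reflected crossing $c^{*}$ (with reversed crossing information), and several arcs joining them via the replacements in Figure~\ref{fgconvert2}. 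A direct comparison shows that the two tangles differ precisely by a Kauffman flype at the crossing $c$ (together, if necessary, with a virtual Reidemeister move to bring the pictures into standard position), so that $\phi(D,P)$ and $\phi(D,P')$ are K-equivalent.

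The main obstacle is the explicit local computation for move III. One must keep careful track both of the reflection that produces $D^{*}$ (which switches over/under information) and of the vertical position of the four bars relative to the crossing, since the replacement in Figure~\ref{fgconvert2} connects the two sheets $D$ and $D^{*}$ in a way that is sensitive to the relative heights of the bars. The other two moves are routine consequences of Theorem~\ref{kkthm1}, but the third requires drawing out the tangle on both sides and recognizing the resulting transformation as a K-flype; this is where the flype moves in the statement of Theorem~\ref{thm2} become essential.
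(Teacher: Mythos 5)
Your proposal is correct and follows essentially the same route as the paper: reduce to a single cut point move via Theorem~\ref{thmc1}, dispose of moves I and II by observing that $t(D,P)$ and $t(D,P')$ are related by twisted Reidemeister moves so that Theorem~\ref{kkthm1} gives equivalence of the double covers, and handle move III by an explicit local comparison showing that $\phi(D,P)$ and $\phi(D,P')$ differ by virtual Reidemeister moves and a K-flype. One minor correction: cut point move I slides a single cut point through a virtual crossing, which corresponds to twisted Reidemeister move I (not move III combined with a virtual move), but this does not affect your argument.
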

\begin{proof}
Let $D$ be a virtual  link diagram with a cut system $P$. Suppose that  $P'$ is a cut system of $D$ obtained from $P$ by one of cut point moves I or II in Figure~\ref{fgcutptmove}. 
Then $t(D, P')$ is obtained from $t(D, P)$ by a twisted Reidemeister move I or II, respectively. Thus by Theorem~\ref{kkthm1}, $\phi(D, P)$ and $\phi(D, P')$ are equivalent. 
If $P'$ is related to $P$ by a cut point move III in Figure~\ref{fgcutptmove}, then $\phi(D, P)$ and $\phi(D,P')$ are related by virtual Reidemeister moves and K-flypes as in Figure~\ref{fgprflem21}. 
If the orientations of some strings of a virtual link diagram are different from those in Figure~\ref{fgprflem21}, we have the result by a similar argument. 
\end{proof}
\begin{figure}[h]
\centerline{\parbox{5cm}{
 \includegraphics[width=5cm]{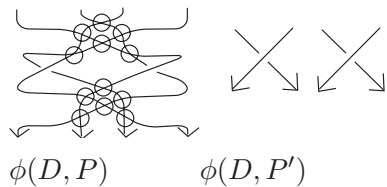}\\
\hspace{.6cm}$\phi(D,P)$\hspace{1.2cm}$\phi(D,P')$
}}
\caption{Converted normal diagrams related by a cut point move III}\label{fgprflem21}
\end{figure}
\begin{lem}\label{lem1}
Let $(D,P)$ and $(D',P')$ be virtual link diagrams with canonical cut systems.
If $D'$ is equivalent (or K-equivalent) to $D$, then $\phi(D,P)$ and $\phi(D',P')$ are K-equivalent.
\end{lem}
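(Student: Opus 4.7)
The plan is to reduce Lemma~\ref{lem1} to a case-by-case local verification, exploiting the freedom granted by Lemma~\ref{lem2}. By Lemma~\ref{lem2}, for a fixed virtual link diagram the K-equivalence class of $\phi(D,Q)$ does not depend on which cut system $Q$ is chosen. Consequently, to compare the converted normal diagrams associated with the canonical cut systems $P$ of $D$ and $P'$ of $D'$, I may replace $P$ and $P'$ by any other cut systems $Q$ of $D$ and $Q'$ of $D'$ of my choice. The task therefore reduces to this: for each elementary move $D\leftrightarrow D'$ (classical Reidemeister I--III, virtual Reidemeister I--IV, and K-flype) I must exhibit cut systems $Q$ on $D$ and $Q'$ on $D'$ such that $\phi(D,Q)$ and $\phi(D',Q')$ are K-equivalent.

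The classical Reidemeister moves are the easy case. These moves are supported in a disk containing only classical crossings, so the canonical cut system $P$ of $D$ is untouched by the move and can serve simultaneously as a cut system of $D'$ (it coincides with $P'$ outside the disk, and inside the disk neither $P$ nor $P'$ has any points since no virtual crossings are present there). Thus $t(D,P)$ and $t(D',P')$ are related by the identical classical Reidemeister move as twisted link diagrams, and Theorem~\ref{kkthm1} gives $\phi(D,P)\sim\phi(D',P')$ as virtual links, hence K-equivalent.

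For the virtual Reidemeister moves and the K-flype I would first use cut point move~I to shift each pair of canonical cut points associated with a virtual crossing onto the two strands that enter the local disk of the move. After this normalization, the local pictures of $t(D,Q)$ and $t(D',Q')$ agree outside the disk, and inside the disk they are small twisted tangles with bars placed on the boundary strands. One then draws the double cover of each tangle explicitly using Figure~\ref{fgconvert2}. For virtual R1 and R2 the two double-cover tangles are related by classical Reidemeister moves or by a twisted Reidemeister move on the underlying twisted diagrams, both handled by Theorem~\ref{kkthm1}. For virtual R3 and R4 the double covers are connected by a short sequence of generalized Reidemeister moves together with one K-flype, which appears precisely when a virtual crossing is pushed through a classical crossing in the lift. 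For the K-flype on $D$ itself, the doubled tangle on each side already realizes two nested K-flype configurations, so the outputs are K-equivalent by two applications of the K-flype.

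The main obstacle is purely diagrammatic: in the R3, R4 and K-flype cases, each move admits several orientation conventions for the strands, and one must verify the double-cover translation in each. My plan is to carry out the computation in one representative orientation for each move and to argue that the remaining orientations follow by the symmetry of the double-cover construction under reflection and strand reversal, together with Lemma~\ref{lem2} applied to absorb any discrepancy in bar placement into cut point moves~I and II.
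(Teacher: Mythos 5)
Your proposal is correct and follows essentially the same strategy as the paper's proof: reduce to a single elementary move, use Lemma~\ref{lem2} (via cut point moves) to replace the canonical cut systems by more convenient ones, and then verify each case locally by applying Theorem~\ref{kkthm1} to the underlying twisted diagrams, with explicit diagrammatic checks for the virtual Reidemeister moves and the K-flype. The minor discrepancies in bookkeeping (e.g., exactly where a K-flype is forced to appear --- the paper needs it only for virtual Reidemeister move IV, via cut point move III, and disposes of the K-flype case by virtual Reidemeister moves alone) do not affect the validity of the argument, since K-equivalence is the stated conclusion in every case.
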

\begin{proof}
Let $D_1$ be a virtual  link diagram with the canonical cut system $P_1$. Suppose that a virtual link diagram $D_2$ is obtained from $D_1$ by one of generalized Reidemeister moves or K-flype  
and $P_2$ is the canonical cut system of $D_2$.
If $D_2$ is related to $D_1$ by one of Reidemeister moves, then $\phi(D_1, P_1)$ and $\phi(D_2,P_2)$ are related by two Reidemeister moves. 
As in Figure~\ref{fgprflem11} (i) or (ii), suppose that  $D_2$ is related to $D_1$  by a virtual Reidemeister move I or II and let $P_2'$ be the cut system obtained from $P_2$ by cut point moves I and II as in the figure. 
By Lemme~\ref{lem2},  $\phi(D_2,P_2)$  are $\phi(D_2,P_2')$ are equivalent.
By Theorem~\ref{kkthm1}, $\phi(D_1,P_1)$  are $\phi(D_2,P_2')$ are equivalent since $t(D_1, P_1)$ and $t(D_2,P_2')$ are related by virtual Reidemeidter move I or II, which means that $\phi(D_1,P_1)$  and  $\phi(D_2,P_2)$ are equivalent. 
As in Figure~\ref{fgprflem11} (iii), suppose that  $D_2$ is related to $D_1$  by a virtual Reidemeister move III and let $P_1'$ (or $P_2'$) be the cut system obtained from $P_1$ (or $P_2$) by cut point moves I and II as in the figure. 
By Lemme~\ref{lem2},  $\phi(D_1,P_1)$  (or $\phi(D_2,P_2)$ ) and $\phi(D_1,P_1')$ (or $\phi(D_2,P_2')$) are equivalent.
By Theorem~\ref{kkthm1} $\phi(D_1,P_1')$  are $\phi(D_2,P_2')$ are equivalent since $t(D_1,P_1')$ and $t(D_2,P_2')$ are related by virtual Reidemeidter move III, 
which means that $\phi(D_1,P_1)$  and $\phi(D_2,P_2)$ are equivalent. 
As in Figure~\ref{fgprflem11}  (iv), suppose that  $D_2$ is related to $D_1$  by a virtual Reidemeister move IV and let $P_1'$ (or $P_2'$) be the cut system obtained from $P_1$ (or $P_2$) by cut point moves I and II  
(or cut point moves I , II and III) as in the figure. 
By the similar reason,  $\phi(D_1,P_1)$  and $\phi(D_2,P_2)$ are K-equivalent. 
\begin{figure}[h]
\centering{
\begin{tabular}{ccc}
\includegraphics[width=3cm]{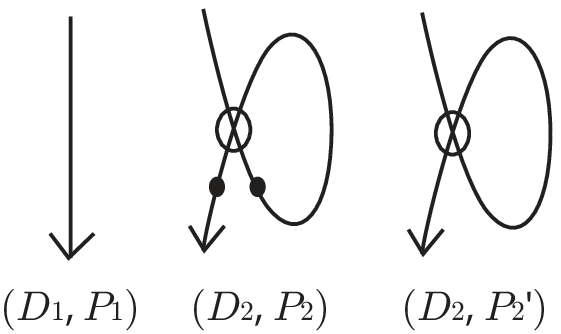} &\includegraphics[width=3.5cm]{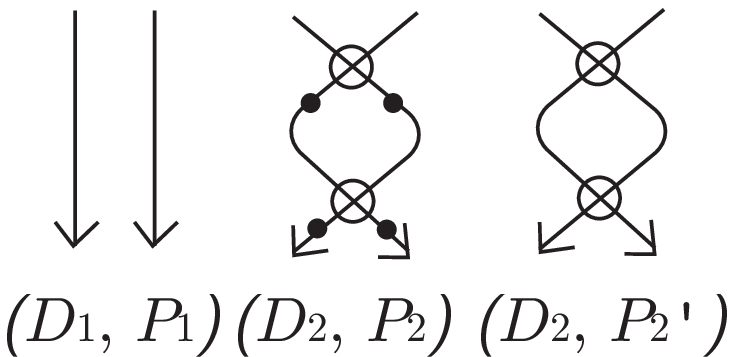}&
\includegraphics[width=4cm]{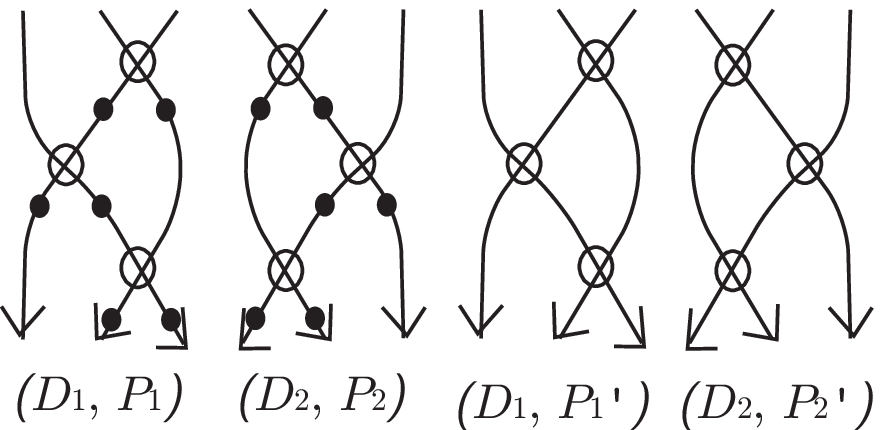}\\
{\small Virtual Reidemeister move I }&{\small Virtual Reidemeister move II}&{\small Virtual Reidemeister move III}\\
(i)&(ii)&(iii)\\
\includegraphics[width=4cm]{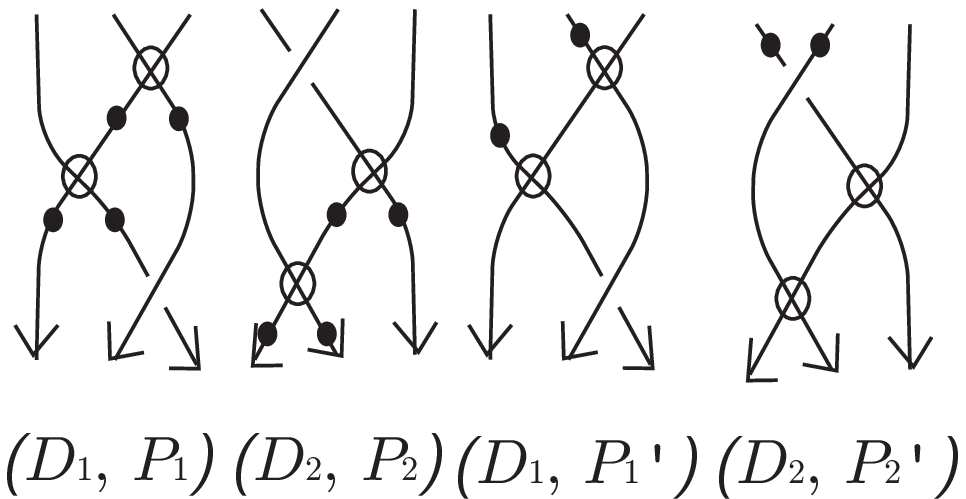}& \multicolumn{2}{c}{\includegraphics[width=7cm]{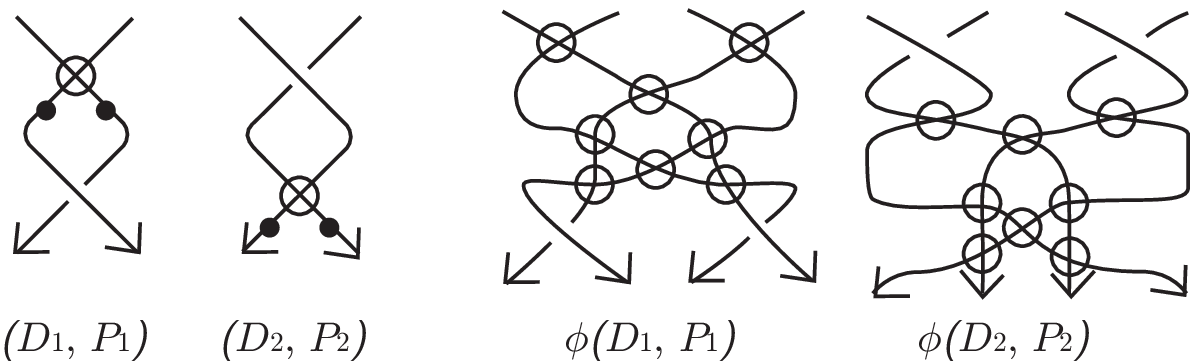}}\\
{\small Virtual Reidemeister move IV}&\multicolumn{2}{c}{K-flype}\\
(iv)&\multicolumn{2}{c}{(v)}\\
\end{tabular}}
\caption{Diagrams related by a virtual reidemeister moves and a K-flype}\label{fgprflem11}
\end{figure}
If $D_1$ is related to $D_2$ by a K-flype, then $\phi(D_1, P_1)$ and $\phi(D_2,P_2)$ are related by some virtual Reidemeister moves in  Figure~\ref{fgprflem11} (v). 
In Figure~\ref{fgprflem11}, if the orientation of some strings of virtual link diagram $D_i$ are different from those of it,  we have the result by a similar argument. 
\end{proof}
%

\section{Proof of Theorems~\ref{thmk1} and \ref{thmk2}}\label{property}

Let $D$ be a virtual link diagram. The {\it Gauss diagram} of $D$ is a set of oriented circles such that each component is the preimage of $D$ with oriented chords each of which corresponds to a classical crossing and  its starting point (or ending point) indicates an over path  (or an under path) of the  classical crossing. Each chord is equipped with a sign of the corresponding classical crossing. 
The Gauss diagram in Figure \ref{fgexgauss1} (i)  is that of the virtual knot diagram in Figure \ref{fig:extwtdiag} (i). 
\begin{figure}[h]
\centering{
\includegraphics[width=10cm]{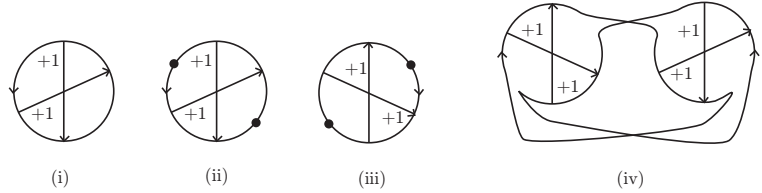} }
\caption{The Gauss diagrams}\label{fgexgauss1}
\end{figure}
For  a virtual link diagram $D$ with a set of points, $P$, the Gauss diagram with points of $(D,P)$ is obtained from the Gauss diagram of $D$ by adding points on arcs which correspond to the points of $D$. In Figure~\ref{fgexgauss1} (ii) we see  the Gauss diagram with a set of points of the virtual link diagram with a set of points in Figure~\ref{fgcutpt} (ii). We denote the Gauss diagram of $D$ with a set of points, $P$ by $G(D, P)$. 
Let $(D^*,P^*)$ be the virtual link diagram with a cut system which is obtained from $(D,P)$ by reflection with respect to $y$-axis and switching the over-under information of all classical crossings of $D$.
The Gauss diagram with points of $(D^*,P^*)$ is obtained from the Gauss diagram of $(D,P)$ by reflection with respect to $y$-axis and revering all orientations of chords.  
For example, the Gauss diagram with points in Figure~\ref{fgexgauss1} (iii) is the Gauss diagram of the virtual knot diagram with a set of points obtained from a virtual knot diagram  in Figure \ref{fig:extwtdiag} (ii) by the reflection with respect to $y$-axis and switching all classical crossings. 
The Gauss diagram of $\phi(D,P)$ is obtained from the Gauss diagrams of $D\amalg D^*$ by a local replacement around each point $p$ of the Gauss diagram of $D$ and around the corresponding point $p^*$ of that of $D^*$ as in  Figure~\ref{fggaussrep}.  For a virtual knot diagram of $D$ with a set of points on edges $P$, we denote the Gauss diagram of $\phi(D,P)$ by $G(\phi(D, P))$. 
The Gauss diagram in Figure~\ref{fgexgauss1} (iv) is $G(\phi(D,P))$ for the $(D, P)$ depicted in Figure~\ref{fgcutpt} (ii) by the map $\phi$. 
%
%
\begin{figure}[h]
\centering{
\includegraphics[width=6cm]{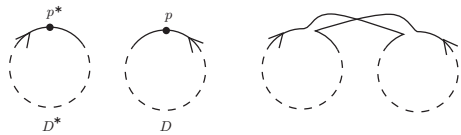} }
\caption{Replacement of  Gauss diagams}\label{fggaussrep}
\end{figure}
For the Gauss diagram of $(D\amalg D^*$, $P\amalg P^*)$, suppose that $p_1, \dots, p_n$ are points in $P$ such that the point $p_{i+1}$ follows the point $p_{i}$ along the orientaion of $D$ and the point $p_i^*$ in $P^*$  is symmetric to the point $p_i$. Let $A_i$ (or $A_i^*$) be the arc of the Gauss diagram of $G(D\amalg  D^*, P\amalg P^*)$ between two points $p_i$ and $p_{i+1}$ (or $p_i^*$ and $p_{i+1}^*$), and  the arc between two points $p_n$ and $p_{1}$ (or $p_n^*$ and $p_{1}^*$) is $A_n$ (or $A_n^*$). Note that $A_i$ is symmetric to $A_i^*$. We also denote an arc of $G(\phi(D,P))$ which corresponds to  $A_i$ or $A_i^*$  of $G(D\amalg D^*, P\amalg P^*)$ by $\widetilde{A_i}$ or $\widetilde{A_i^*}$, respectively. Here $\widetilde{A_i}$ (or $\widetilde{A_i^*}$) is the arc in $G(\phi(D,P))$ which is obtained from $A_i$ (or $A_i^*$) by removing a regular neighborhood of $p_i$ and $p_{i+1}$(or  $p_i^*$ and $p_{i+1}^*$).

We proof the following lemmas in stead of Theorem~\ref{thmk1}.
\begin{lem}\label{lem3}
Let $D$ be a virtual knot diagram with  a set of points on edges $P=\{p_1,\dots, p_{2n}\}$ for a positive integer $n$. Then  $\phi(D,P)$ is a 2-component virtual link diagram $D_1\cup D_2$. Furthermore if an arc $A_i$ belongs to $G(\phi(D, P))|_{D_1}$ (or $G(\phi(D, P))|_{D_2}$), then $A_{i+1}$ and $A_{i}^*$ belong  to $G(\phi(D, P))|_{D_2}$ (or $G(\phi(D, P))|_{D_1}$).
\end{lem}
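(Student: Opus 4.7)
The plan is to trace the components of $\phi(D,P)$ directly, by reading off the effect of the replacement shown in Figure~\ref{fggaussrep} on the Gauss diagram of $D \amalg D^*$. The starting observation is that $G(D \amalg D^*, P \amalg P^*)$ consists of two oriented circles (one for $D$ and one for $D^*$), each carrying $2n$ marked cut points, and that $G(\phi(D,P))$ is obtained from this by performing a local surgery at each pair $(p_i, p_i^*)$.

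First I would extract the combinatorial rule of that surgery. At each paired point $(p_i, p_i^*)$ there are four incident arc-ends: the terminal end of $A_{i-1}$ and the initial end of $A_i$ on the $D$-circle, together with their mirror counterparts on the $D^*$-circle. Inspecting Figure~\ref{fggaussrep}, the replacement splices these crosswise, so that in $G(\phi(D,P))$ the arc $\widetilde{A_{i-1}}$ continues into $\widetilde{A_i^*}$, while $\widetilde{A_{i-1}^*}$ continues into $\widetilde{A_i}$. Equivalently, each passage through a cut point toggles the sheet ($D \leftrightarrow D^*$) without reversing the direction of travel; this is exactly the effect one expects of the double covering branched along the bars, transported to the Gauss-diagram level via the definition in Figure~\ref{fgconvert2}.

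Second, I would perform the trace. Starting at the initial end of $\widetilde{A_1} \subset D$ and following the orientation, the toggling rule forces the successive arcs visited to be $\widetilde{A_1}, \widetilde{A_2^*}, \widetilde{A_3}, \widetilde{A_4^*}, \dots$. By Corollary~\ref{cor1} the number of cut points is $2n$, an even number, so after passing through all of them we have toggled the sheet an even number of times, returned to the $D$-side, and reconnected to $\widetilde{A_1}$. This closes up into a single component $D_1$ consisting of the odd-indexed arcs on $D$ and the even-indexed arcs on $D^*$. An entirely analogous trace beginning at $\widetilde{A_2}$ produces the complementary component $D_2$, which comprises the even-indexed arcs on $D$ and the odd-indexed arcs on $D^*$. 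Together this shows $\phi(D,P) = D_1 \cup D_2$ has exactly two components.

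The alternation statement then reduces to comparing parities. If $\widetilde{A_i} \subset D_1$ then $i$ is odd, so $\widetilde{A_{i+1}}$ (even-indexed on $D$) and $\widetilde{A_i^*}$ (odd-indexed on $D^*$) both lie in $D_2$; the case $\widetilde{A_i} \subset D_2$ is symmetric. The main obstacle I anticipate is verifying the crossing-splice rule in the first step, i.e.\ checking that Figure~\ref{fggaussrep} genuinely switches sheets rather than reconnecting each arc within its own sheet; once this is unpacked from the definition of $\phi$, the rest is a direct bookkeeping argument.
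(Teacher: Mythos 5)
Your proof is correct, and the key combinatorial input --- that the surgery of Figure~\ref{fggaussrep} splices the four arc-ends at a pair $(p_i,p_i^*)$ crosswise, so that $\widetilde{A_{i-1}}$ continues into $\widetilde{A_i^*}$ and $\widetilde{A_{i-1}^*}$ into $\widetilde{A_i}$ --- is exactly the fact the paper relies on as well. The organization differs: the paper argues by induction on $n$, checking the case $n=1$ from a figure and then, for general $n$, performing the replacement at the first $2n-2$ pairs of points, invoking the inductive hypothesis, and finally splicing at the last two pairs. You instead unroll this into a single global trace $\widetilde{A_1},\widetilde{A_2^*},\widetilde{A_3},\widetilde{A_4^*},\dots$, which closes up after $2n$ steps because the sheet is toggled an even number of times, and you read off the bipartition (odd-indexed arcs of $D$ together with even-indexed arcs of $D^*$, and vice versa) directly; this matches the paper's base case ($\widetilde{A_1},\widetilde{A_2^*}$ in one component, $\widetilde{A_2},\widetilde{A_1^*}$ in the other). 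Your version has the advantage of making the mod-$2$ structure explicit, which is precisely what is needed later in the proof of Theorem~\ref{thmk2}, where the relevant point is that $\widetilde{A_k}$ and $\widetilde{A_j}$ lie in different components exactly when $k\not\equiv j \pmod{2}$; the induction leaves this slightly implicit. Two small remarks: the evenness of $|P|$ is part of the hypothesis ($P=\{p_1,\dots,p_{2n}\}$), so you need not invoke Corollary~\ref{cor1}, which in any case concerns cut systems while the lemma allows an arbitrary even set of points; and your deferral of the crosswise-splice rule to an inspection of Figures~\ref{fgconvert2} and~\ref{fggaussrep} is at the same level of rigor as the paper itself, so it is not a gap.
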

\begin{proof}
We use the induction on $n$. 
Suppose that $n=1$, i.e., $D$ is a virtual knot diagram with 2 points $p_1$ and $p_2$. The Gauss diagram $G(\phi(D, P))$ is depicted as in Figure~\ref{fglemg1}, where the bold line and the thin line indicate the different components and we dropped all chords in the figure. In this case $\phi(D, P)$ is a 2-component virtual link diagram. Two arcs $\widetilde{A_1}$  and $\widetilde{A_2^*}$ are in one component of $G(\phi(D, P))$, and $\widetilde{A_{2}}$ and $\widetilde{A_{1}^*}$ are in the other.
\begin{figure}[h]
\centering{
\includegraphics[width=6cm]{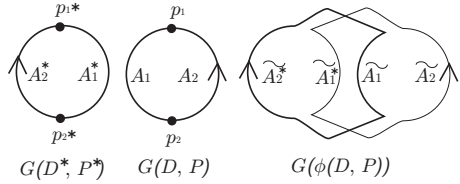} }
\caption{$G(\phi(D,P))$ of $D$ a pair of point}\label{fglemg1}
\end{figure}
Suppose that the statement is hold if the number of points is less than $2n$.
We assume that $D$ is a virtual knot diagram with $2n$ points $p_1,\dots, p_{2n}$. 
We apply the replacement as in Figure~\ref{fggaussrep} to $2n-2$ points $p_1,\dots, p_{2n-2}$ and 
$p_1^*,\dots, p_{2n-2^*}$. Then we obtain the Gauss diagram $G$ with $4$ points $p_{2n-1}, p_{2n}$ and  $p_{2n-1}^*, p_{2n}^*$. 
By the hypothesis, the Gauss diagram $G$  is  depicted as in Figure~\ref{fglemg2} (i), where two arcs ${A_{2n-1}}$ and ${A_{2n}}$ (or two points $p_{2n-1}$ and $p_{2n}$) are in one component of $G$ and two arcs ${A_{2n-1}^*}$ and ${A_{2n}^*}$ (or two points $p_{2n-1}^*$ and $p_{2n}^*$) are in  the other. If an arc $\widetilde{A_i}$ is in  one component of $G$,  $\widetilde{A_i^*}$ (or $\widetilde{A_{i+1}}$) is in the other for  $i\ne 2n-1$ by the hypothesis.  
By applying the replacement in Figure~\ref{fggaussrep} to two pairs of points $p_{2n-1}$ and $p_{2n-1}^*$ and $p_{2n}$ and $p_{2n}^*$ of  the Gauss diagram $G$, we have the Gauss diagram as in Figure~\ref{fglemg2} (ii). Therefore we have the result.
\begin{figure}[h]
\centering{
\includegraphics[width=12cm]{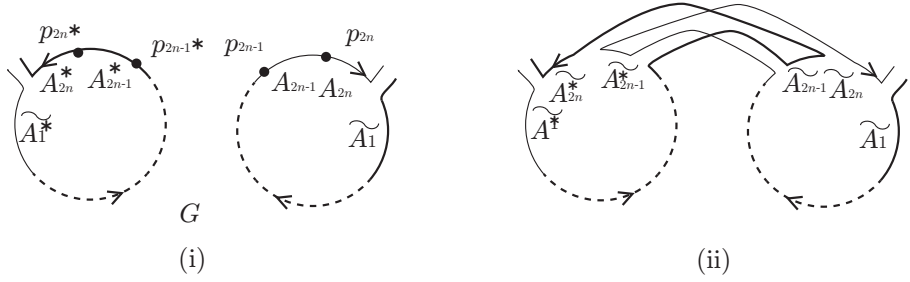} }
\caption{Replacement of  Gauss diagams}\label{fglemg2}
\end{figure}
\end{proof}
Thus we have the following lemma
\begin{lem}\label{thm5}
Let $(D_1, P_1)$ and $(D_2, P_2)$ be  virtual knot diagrams with cut systems. If  $D_1$ and $D_2$ are equivalent (or K-equivalent), then the linking number of $\phi(D_2,P_2)$ is equal to that of $\phi(D_1,P_1)$.
\end{lem}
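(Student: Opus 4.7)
The plan is to assemble Lemma~\ref{thm5} from three ingredients already available in the excerpt, so the argument is essentially a two-line deduction once the pieces are named.

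First, I would note that the linking number requires a 2-component target. Lemma~\ref{lem3}, just established, tells us that for any virtual knot diagram $D$ with a cut system $P$, the converted normal diagram $\phi(D,P)$ is a 2-component virtual link diagram $D_1 \cup D_2$. Applying this to each of $(D_1,P_1)$ and $(D_2,P_2)$ guarantees that the linking numbers of $\phi(D_1,P_1)$ and $\phi(D_2,P_2)$ are defined.

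Next, by the hypothesis that $D_1$ and $D_2$ are equivalent (or K-equivalent) as virtual link diagrams, I invoke Theorem~\ref{thm2} to conclude that the converted normal diagrams $\phi(D_1,P_1)$ and $\phi(D_2,P_2)$ are K-equivalent, i.e., related by a finite sequence of generalized Reidemeister moves and Kauffman flypes. This is the key step that hides all the real work: Theorem~\ref{thm2} was the main theorem of the paper, and without it we would have no access to the equality across different base diagrams and different cut systems.

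Finally, Proposition~\ref{prop4} states that the linking number of a 2-component virtual link diagram is invariant under generalized Reidemeister moves and K-flypes. Since K-equivalence is by definition a sequence of such moves, the linking number is preserved along the sequence, giving
\[
\operatorname{lk}\bigl(\phi(D_1,P_1)\bigr) \;=\; \operatorname{lk}\bigl(\phi(D_2,P_2)\bigr),
\]
which is the conclusion of the lemma.

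There is no serious obstacle here; the only small subtlety is verifying that the cut systems $P_1, P_2$ are allowed to be arbitrary rather than canonical, but this is already handled since Theorem~\ref{thm2} is stated for arbitrary cut systems (the reduction to canonical cut systems occurs inside its proof via Lemma~\ref{lem2}). So the proof I would write is essentially: apply Lemma~\ref{lem3} to secure 2 components, apply Theorem~\ref{thm2} to get K-equivalence of the converted diagrams, and apply Proposition~\ref{prop4} to conclude equality of linking numbers.
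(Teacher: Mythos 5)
Your proposal is correct and follows essentially the same route as the paper: the paper's proof simply invokes Theorem~\ref{thm2} to get K-equivalence of $\phi(D_1,P_1)$ and $\phi(D_2,P_2)$ and then Proposition~\ref{prop4} to conclude equality of linking numbers. Your additional remark that Lemma~\ref{lem3} guarantees the two-component structure (so the linking numbers are defined) is a reasonable bit of extra care that the paper leaves implicit.
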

\begin{proof}
By Theorem~\ref{thm2}, $\phi(D_1, P_1)$ and $\phi(D_2, P_2)$ are K-equivalent. Hence by Proposition~\ref{prop4}, they have the same linking numbers.
\end{proof}
For a virtual knot $K$ and its diagram $D$ with a cut system $P$, we denote  the linking number of $\phi(D,P)$ by $\mathrm{lk}_N(K)$ or $\mathrm{lk}_N(D)$. It dose not depend on the choice of $P$ by Lemma~\ref{thm5}.

Let $D$ be a virtual knot diagram and $G$ be a Gauss diagram of $D$. 
For a classical crossing $c$, we denote by $\gamma_c$ the chord of $G$ corresponding to $c$. The endpoints of $\gamma_c$ divides the circle of $G$ into 2 arcs. We denote  the arcs by $I_c$ and $I'_c$ where $I_c$ is the arc which starts from the tail of $\gamma_c$ and terminates at the head. 
A classical crossing  $c$ of $D$ is said to be   {\it odd} if there are an odd number of endpoints of chords of $G$ on $I_c$. The {\it odd writhe} of $D$ is the sum of signs of odd crossings of $D$．Note that if a virtual knot diagram is normal, all classical crossings are not odd.

\begin{thm}[\cite{rkauE}]\label{odd1}
The odd writhe is an invariant of virtual knots.
\end{thm}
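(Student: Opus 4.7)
The plan is to verify invariance of the odd writhe under each generalized Reidemeister move, working entirely at the level of the Gauss diagram. First I would adopt the following reformulation: any chord $\gamma_{c'}$ distinct from $\gamma_c$ contributes $0$, $1$, or $2$ endpoints to the arc $I_c$, and contributes exactly one endpoint precisely when $\gamma_c$ and $\gamma_{c'}$ interleave (their endpoints alternate along the circle). Consequently $c$ is odd if and only if $\gamma_c$ is linked with an odd number of other chords, so oddness becomes a purely combinatorial property of the chord pattern.

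Since every virtual Reidemeister move leaves the Gauss diagram unchanged, such moves preserve every sign and every odd/even status, hence preserve the odd writhe trivially. For a Reidemeister I move the new chord has adjacent endpoints on the Gauss circle, so it links no other chord (the new crossing is even), and every pre-existing chord sees both new endpoints on one side of itself, so its linking parity is unchanged. For a Reidemeister II move, two chords $\gamma_1,\gamma_2$ of opposite sign are introduced with four endpoints appearing consecutively in a nested configuration on the Gauss circle: they do not interleave each other, and any external chord has both endpoints outside this short block and therefore links $\gamma_1$ if and only if it links $\gamma_2$. So $c_1$ and $c_2$ have the same odd/even status, and their opposite signs cause their contributions to cancel; meanwhile each external chord gains $0$ or $2$ links with $\{\gamma_1,\gamma_2\}$, preserving its parity.

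For a Reidemeister III move, three chords $\gamma_1,\gamma_2,\gamma_3$ are replaced by three chords $\gamma'_1,\gamma'_2,\gamma'_3$ of the same signs. The six endpoints of these chords are locally rearranged on the Gauss circle while the picture outside the local triangle is left alone. I would verify, by a case analysis over the oriented R3 variants, that the pattern of pairwise interleaving among the three local chords is preserved, and that for every external chord $\gamma'$ the parity of its linkings with each local chord is preserved.

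The main obstacle is this last R3 step, since it splits into several sub-cases distinguished by strand orientations and over/under data, each of which requires a direct inspection of how the six endpoints get reshuffled in the Gauss diagram. Once these checks are in place, every local chord keeps both its sign and its odd/even status, every external chord keeps its linking parity, and invariance of the odd writhe follows immediately.
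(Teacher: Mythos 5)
The paper does not actually prove this statement---it is quoted from Kauffman's paper \cite{rkauE}---so there is no in-paper argument to compare against; I will judge your sketch on its own terms. Your overall strategy is the standard and correct one: reformulate oddness of $c$ as ``$\gamma_c$ is linked with an odd number of other chords'' (correct, since a chord contributes $1$ endpoint to $I_c$ exactly when it interleaves $\gamma_c$ and $0$ or $2$ otherwise), observe that virtual moves leave the Gauss diagram untouched, and check R1, R2, R3 directly. Your R1 and R2 analyses are essentially right; the only quibble is that for the coherently oriented R2 variant the two new chords \emph{do} interleave each other, not nest---but since each then contributes $1$ to the other's count, they still share the same parity and their opposite signs still cancel, so this is harmless.

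The genuine gap is in the R3 step, and it is precisely the claim you single out as the one to verify: \emph{the pattern of pairwise interleaving among the three local chords is not preserved by R3}. Concretely, label the three strand-arcs $\alpha,\beta,\gamma$ in cyclic order on the Gauss circle with slots $\alpha_1,\alpha_2,\beta_1,\beta_2,\gamma_1,\gamma_2$; the move swaps the two slots within each arc. Taking before-chords $\{\alpha_2,\beta_2\}$, $\{\alpha_1,\gamma_2\}$, $\{\beta_1,\gamma_1\}$ (a realizable R3 configuration), exactly one pair interleaves; after the move the chords are $\{\alpha_1,\beta_1\}$, $\{\alpha_2,\gamma_1\}$, $\{\beta_2,\gamma_2\}$ and a \emph{different} set of pairs interleaves---in fact all three pairwise linkings can flip. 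What is true, and what you actually need, is the weaker statement that for each local chord the \emph{number} of other local chords it links changes by $0$ or $\pm 2$: one checks that $\{\alpha_i,\beta_j\}$ links the $\alpha\gamma$-chord iff $i=1$ and links the $\beta\gamma$-chord iff $j=2$, so flipping both indices changes its count by an even amount (and cyclically for the other two chords). With that corrected lemma, together with your (correct) observation that external chords cannot distinguish slots within a short arc, the R3 case and hence the whole proof goes through; as written, however, the verification you propose would fail.
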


\begin{proof}[Proof of Theorem~\ref{thmk2}]
Let $D$ be a virtual knot diagram and $P$ be a cut system of $D$.  
It is sufficient to show that 
odd crossings of $D$  correspond  to non self classical crossings  of $\phi(D,P)$. 
Since $(D,P)$ admits an alternate orientation, the circle of the Gauss diagram $G(D,P)$  of $(D,P)$ admits an alternate orientation such that  one endpoint of each chord is 
a sink of the orientations and the other is a source. This implies the following condition: 
\begin{itemize}
\item[($\ast$)] 
For any classical crossing $c$ of $(D, P)$,
 the sum of the number of cut points and that of endpoints of chords appearing on the arc $I_c$, is even. 
\end{itemize} 
Let $A_1, A_2, \dots $ be the arcs obtained by cutting the circle of $G(D,P)$ along the cut points.  We assume that 
 the arc $A_{i+1}$ appears after the arc $A_{i}$ along the orientation of $D$. 
We also denote  an arc of $G(\phi(D,P))|_D$ which corresponds to $A_i$ by $\widetilde{A_i}$. 
For a classical  crossing $c$ of $(D, P)$, the classical crossing corresponding to $c$ in $\phi(D, P)|_D$ is denoted by $\tilde{c}$. 
Let $c$ be  an odd crossing of $(D, P)$. 
Suppose that  one endpoint of  $\gamma_c$  
is on $A_k$ and the other endpoint  is on $A_j$. By definition, there are an odd number of endpoints of chords on $I_c$ in  $G(D,P)$. 
By the condition ($\ast$) above, we see that there are an odd number of cut points on $I_c$ in  $G(D,P)$.  
Then we see that $k$ is not congruent to  $j$ modulo $2$.  By Lemma~\ref{lem3} $\widetilde{A_k}$ and $\widetilde{A_j}$ in  $G(\phi(D,P))|_D$ are in the different components of $G(\phi(D,P))$. Thus the classical crossing $\tilde{c}$ is a non self classical crossing of $\phi(D,P)$.
\end{proof}

We show some property of $\mathrm{lk}_N(D)$. They are also obtained from the property of the odd writhe.

\begin{cor}[\cite{rkauE}]\label{prop3}
Let $K$ be a normal virtual knot. Then $\mathrm{lk}_N(K)$ is zero.
\end{cor}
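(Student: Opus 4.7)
The plan is to combine Theorem~\ref{thmk2} with the fact, recorded just before Theorem~\ref{odd1}, that a normal virtual knot diagram has no odd classical crossings, and therefore zero odd writhe.

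Concretely, since $K$ is a normal virtual knot, by definition it admits a normal diagram $D$. Choose any cut system $P$ of $D$; the invariant $\mathrm{lk}_N(K)$ equals $\mathrm{lk}_N(D) = \mathrm{lk}(\phi(D,P))$ by Lemma~\ref{thm5}, and by Theorem~\ref{thmk2} this coincides with the odd writhe of $D$. So it suffices to show that the odd writhe of $D$ is zero, i.e., that every classical crossing of $D$ is even (in the Gauss-diagram sense).

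To see this, observe that because $D$ is normal, it admits an alternate orientation with the empty cut system, so the Gauss diagram $G(D,\emptyset)$ satisfies condition $(\ast)$ from the proof of Theorem~\ref{thmk2} with no cut points. Condition $(\ast)$ then reduces to: for every classical crossing $c$, the number of endpoints of chords on the arc $I_c$ is even. By definition this means that no classical crossing of $D$ is odd, so the odd writhe of $D$ equals $0$. Hence $\mathrm{lk}_N(K) = 0$.

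There is no real obstacle here: the statement is a direct consequence of the identification $\mathrm{lk}_N = $ odd writhe established in Theorem~\ref{thmk2}, together with the elementary observation that the alternate orientation on a normal diagram forces the parity condition $(\ast)$ at every crossing with no cut points. The only care needed is to verify that one may indeed take the representative $D$ to be normal and the cut system $P$ to be empty (or, equivalently, to check the parity count using any cut system via $(\ast)$); both are immediate.
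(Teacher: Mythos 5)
Your proof is correct, but it takes a different route from the paper's. The paper's own argument is more direct and does not invoke Theorem~\ref{thmk2} at all: it picks a normal diagram $D_N$ of $K$, notes that the empty set is then a cut system, observes that $\phi(D_N,\emptyset)$ is literally the split union $D_N\amalg D_N^{*}$ (there are no bars, so the two mirror copies are never joined), which visibly has linking number zero, and concludes by the independence of $\mathrm{lk}_N$ from the choice of diagram and cut system (Lemma~\ref{thm5}/Proposition~\ref{prop4}). You instead route through the identification $\mathrm{lk}_N=\,$odd writhe from Theorem~\ref{thmk2} and then check, via the alternate orientation and condition $(\ast)$ with $P=\emptyset$, that a normal diagram has no odd crossings — a fact the paper also records just before Theorem~\ref{odd1}. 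Both arguments are sound; the paper even remarks that these corollaries ``are also obtained from the property of the odd writhe,'' which is precisely your approach. The trade-off: the paper's proof is lighter, relying only on the geometry of the double cover of a diagram with no bars, whereas yours leans on the heavier Theorem~\ref{thmk2} but makes the conceptual link to the odd writhe explicit and would generalize to any statement already known for the odd writhe.
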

\begin{proof}
Let $D_N$ be a normal knot diagram of $K$. Any cut system of $D_N$ is related to an empty set by some cut point moves since $D_N$ is normal. The virtual link diagram $\phi(D_N,\emptyset)$ is the disjoint union of $D_N$ and $D^*_N$, $D_N \amalg D_N^{*}$. Thus we see that $\mathrm{lk}_N(K)$ is zero  by Proposition~\ref{prop4}, since the linking number of $D_N \amalg D_N^{*}$ is zero.
\end{proof}
Let $D$ be a virtual knot diagram. The virtual knot diagram obtained from $D$ by switching the over-under information of all classical crossing (or by reflection) is denoted by $D^{\sharp}$ (or $D^{\dag}$).
\begin{cor}[\cite{rkauE}]\label{prop8}
Let $D$ be a virtual knot diagram. If $\mathrm{lk}_N(D)$ is not zero, then $D$ is not equivalent to $D^{\sharp}$ (or $D^{\dag})$. 
\end{cor}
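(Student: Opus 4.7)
The plan is to derive the corollary by contraposition from the invariance of $\mathrm{lk}_N$. Specifically, I will show that both operations $D\mapsto D^\sharp$ and $D\mapsto D^\dag$ negate $\mathrm{lk}_N$; if $D$ were equivalent (or K-equivalent) to $D^\sharp$ (respectively to $D^\dag$), then Lemma~\ref{thm5} would give $\mathrm{lk}_N(D)=-\mathrm{lk}_N(D)$, forcing $\mathrm{lk}_N(D)=0$ and contradicting the hypothesis.

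The cleanest way to establish both sign-reversals is via the identification $\mathrm{lk}_N(D)=$ (odd writhe of $D$) supplied by Theorem~\ref{thmk2}. For $D^\sharp$, switching all over/under information flips the sign of every classical crossing while leaving the underlying Gauss diagram (positions and orientations of chords, as well as the orientation of the circle) unchanged. In particular, the arcs $I_c$ and $I_c'$ are the same for every chord, so the set of odd crossings is preserved and only their signs are reversed. This yields $\mathrm{lk}_N(D^\sharp)=-\mathrm{lk}_N(D)$.

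For $D^\dag$, a mirror reflection also flips the sign of every classical crossing. It additionally reverses the cyclic orientation of the Gauss circle, which swaps $I_c$ and $I_c'$ for each chord $c$. However, if $D$ has $n$ chords, then $I_c\cup I_c'$ carries $2(n-1)$ endpoints of other chords, so $|I_c|$ and $|I_c'|$ agree modulo $2$; hence the oddness of each crossing is preserved under this swap. Combining the preservation of odd crossings with the simultaneous flipping of their signs gives $\mathrm{lk}_N(D^\dag)=-\mathrm{lk}_N(D)$.

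The one mildly nontrivial step is the parity observation in the reflection case; the rest is bookkeeping. Once both sign-reversals are in hand, contraposition combined with the invariance of $\mathrm{lk}_N$ (Lemma~\ref{thm5}) yields the corollary for both $D^\sharp$ and $D^\dag$ in a single line.
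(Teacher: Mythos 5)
Your proof is correct and follows the same route as the paper, which simply asserts $\mathrm{lk}_N(D^{\sharp})=\mathrm{lk}_N(D^{\dag})=-\mathrm{lk}_N(D)$ as clear and concludes by invariance; you have merely filled in the justification via the odd-writhe identification of Theorem~\ref{thmk2}. One small slip: switching all over/under information reverses the orientation of every chord in the Gauss diagram (so $I_c$ and $I_c'$ swap in the $D^{\sharp}$ case too), but your own parity observation shows this does not affect which crossings are odd, so the argument stands.
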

\begin{proof}
It is clear that  $\mathrm{lk}_N(D^{\sharp})=\mathrm{lk}_N(D^{\dag})=-\mathrm{lk}_N(D)$.
\end{proof}

For example, the virtual knot presented by the diagram $D$ in Figure~\ref{fgexapli1}, is not normal by Corollary~\ref{prop3}, since $\mathrm{lk}_N(D)=-2$. By Corollary~\ref{prop8},  $D$ is not equivalent to $D^{\sharp}$ (or $D^{\dag})$.
\begin{figure}[h]
\centering{
\includegraphics[width=10cm]{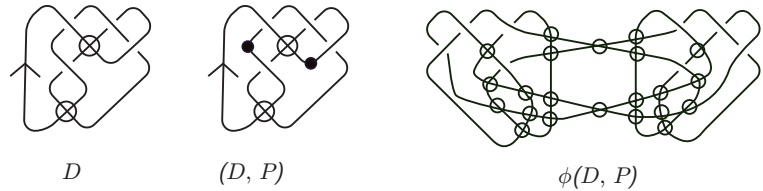} }
\caption{A non normal virtual link}\label{fgexapli1}
\end{figure}

\vspace{8pt}
{\bf Acknowledgement}

The author  would like to thank Seiichi Kamada for his useful suggestion.

 \end{document}